\newtheorem*{thm*}{Theorem}
\newtheorem{thm}{Theorem}
\newtheorem{lem}[thm]{Lemma}
\newtheorem{pro}[thm]{Proposition}
\newtheorem{cor}[thm]{Corollary}
\newtheorem{ques}[thm]{Question}
\newcommand{\N}{\mathbb{N}}
\begin{document}

\title{Proportional Choosability of Complete Bipartite Graphs}

\author{Jeffrey A. Mudrock\footnotemark[1], Jade Hewitt\footnotemark[1], Paul Shin\footnotemark[1], and Collin Smith\footnotemark[1]}

\footnotetext[1]{Department of Mathematics, College of Lake County, Grayslake, IL 60030.  E-mail:  {\tt {jmudrock@clcillinois.edu}}}

\maketitle

\begin{abstract}
Proportional choosability is a list analogue of equitable coloring that was introduced in 2019.  The smallest $k$ for which a graph $G$ is proportionally $k$-choosable is the \emph{proportional choice number} of $G$, and it is denoted $\chi_{pc}(G)$.  In the first ever paper on proportional choosability, it was shown that when $2 \leq n \leq m$, $ \max\{ n + 1, 1 + \lceil m / 2 \rceil\} \leq \chi_{pc}(K_{n,m}) \leq n + m - 1$.  In this note we improve on this result by showing that $ \max\{ n + 1, \lceil n / 2 \rceil + \lceil m / 2 \rceil\} \leq \chi_{pc}(K_{n,m}) \leq n + m -1- \lfloor m/3 \rfloor$.  In the process, we prove some new lower bounds on the proportional choice number of complete multipartite graphs.  We also present several interesting open questions.

\medskip

\noindent {\bf Keywords.}  graph coloring, equitable coloring, list coloring, proportional choosability

\noindent \textbf{Mathematics Subject Classification.} 05C15

\end{abstract}

\section{Introduction}\label{intro}

In this note all graphs are nonempty, finite, simple graphs unless otherwise noted.  Generally speaking we follow West~\cite{W01} for terminology and notation.  The set of natural numbers is $\N = \{1,2,3, \ldots \}$.  For $m \in \N$, we write $[m]$ for the set $\{1, \ldots, m \}$.  If $G$ is a graph and $S \subseteq V(G)$, we use $G[S]$ for the subgraph of $G$ induced by $S$.  We also let $N_G(S)$ be the subset of $V(G)$ consisting of all vertices adjacent (in $G$) to at least one vertex in $S$.  We write  $\Delta(G)$ for the maximum degree of a vertex in $G$.  We write $K_{n,m}$ for the equivalence class consisting of complete bipartite graphs with partite sets of size $n$ and $m$, and we write $K_{n_1, \ldots, n_t}$ for the equivalence class consisting of complete $t$-partite graphs with partite sets of size $n_1, \ldots, n_t$.  

In 2019, a new notion combining the notions of equitable coloring and list coloring called proportional choosability was introduced~\cite{KM19}.  In this note, we study the proportional choosability of complete bipartite graphs.  Before introducing proportional choosability, we briefly review equitable coloring and list coloring.  

\subsection{Equitable Coloring and List Coloring} \label{basic}

In the classical vertex coloring problem, we seek to color the vertices of a graph with up to $k$ colors so that adjacent vertices receive different colors, a so-called \emph{proper $k$-coloring}.  The chromatic number of a graph $G$, denoted $\chi(G)$, is the smallest $k$ such that $G$ has a proper $k$-coloring.  Equitable coloring is a variation on the classical vertex coloring problem that began with a conjecture of Erd\H{o}s~\cite{E64} in~1964.  This conjecture of Erd\H{o}s was proved in 1970~\cite{HS70} (Theorem~\ref{thm: HS} below).  In 1973, Meyer~\cite{M73} formally introduced equitable coloring.  An \emph{equitable $k$-coloring} of a graph $G$ is a proper $k$-coloring $f$ of $G$ such that the sizes of the color classes associated with $f$ differ by at most one.~\footnote{We work under the assumption that a proper $k$-coloring has exactly $k$, possibly empty, color classes.}  It is easy to see that for an equitable $k$-coloring, the color classes associated with the coloring are each of size $\lceil |V(G)|/k \rceil$ or $\lfloor |V(G)|/k \rfloor$.  We say that a graph $G$ is \emph{equitably $k$-colorable} if there exists an equitable $k$-coloring of $G$. Equitable colorings are useful when it is preferable to form a proper coloring without under-using or over-using any color (see~\cite{JR02,KJ06,P01,T73} for applications).

Unlike ordinary graph coloring, increasing the number of colors can make equitable coloring more difficult.  For example, $K_{3,3}$ is equitably 2-colorable, but it is not equitably 3-colorable.  In 1970, Hajn\'{a}l and Szemer\'{e}di proved the following result.

\begin{thm}[\cite{HS70}] \label{thm: HS}
Every graph $G$ has an equitable $k$-coloring when $k \geq \Delta(G)+1$.
\end{thm}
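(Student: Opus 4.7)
The plan is to prove this classical result by induction on $n = |V(G)|$. First, by appending isolated vertices if necessary, I can assume without loss of generality that $k$ divides $n$; with this normalization an equitable $k$-coloring is simply a proper $k$-coloring whose color classes all have size exactly $n/k$. The base case $n = k$ is immediate (give each vertex its own color), so assume $n > k$ and that the result holds for all smaller graphs.

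The cleanest structural target for the inductive step would be to find an independent set $I \subseteq V(G)$ with $|I| = n/k$ such that $G - I$ admits an equitable $(k-1)$-coloring by induction; then $I$ serves as the $k$-th color class. This direct approach has two gaps: an independent set of exactly that size need not exist in $G$, and even if it does, $\Delta(G - I)$ could equal $k - 1$, preventing a direct recursive call with $k - 1$ colors. So the argument must be set up iteratively rather than one-shot.

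The core of the proof is therefore an exchange/augmentation argument starting from any \emph{nearly equitable} proper $k$-coloring (class sizes differing from $n/k$ by at most $1$), which exists because $k \geq \Delta(G) + 1$ permits a greedy proper coloring and a straightforward balancing pass. Suppose a class $C_i$ is too large and a class $C_j$ is too small. The aim is to move a vertex $v \in C_i$ into $C_j$. If $v$ has no neighbor in $C_j$, the recoloring is trivial. Otherwise one attempts to displace each neighbor of $v$ lying in $C_j$ to some other class; because every vertex has at most $k - 1$ neighbors, there is always \emph{some} color not forbidden, but choosing it may cascade. The key combinatorial lemma is that one can build a finite sequence of single-vertex recolorings, organized as an ``augmenting structure'' on an auxiliary digraph whose arcs record forced displacements, which terminates in a strictly more equitable proper coloring.

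The main obstacle is exactly this augmenting-structure analysis, i.e.\ showing that the iterative process always produces a valid augmenting chain rather than cycling. In the original Hajn{\'a}l--Szemer{\'e}di argument, this is handled via a carefully chosen potential function on the auxiliary digraph of ``blocked'' classes, with the invariant that either an augmenting move succeeds or the potential strictly drops; a cleaner proof due to Kierstead--Kostochka replaces the global potential with a ``small class / big class'' dichotomy and a separate treatment of a dense ``core.'' Either way, once the exchange machinery is in place, iterating it eventually equalizes all class sizes at $n/k$, which is exactly an equitable $k$-coloring of $G$.
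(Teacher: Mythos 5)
The paper does not prove this statement at all: it is the Hajn\'{a}l--Szemer\'{e}di theorem, quoted verbatim from \cite{HS70} as background, so there is no internal proof to compare against. Judged on its own terms, your proposal is an outline of the known proof strategy rather than a proof. The entire difficulty of the theorem is concentrated in the step you label ``the key combinatorial lemma,'' namely that from a nearly equitable proper $k$-coloring with an over-full class and an under-full class one can always reach a strictly more balanced proper coloring by a chain of single-vertex recolorings. You state this lemma, correctly identify it as the main obstacle, and then defer its proof to ``a carefully chosen potential function'' in the original argument or to the Kierstead--Kostochka dichotomy, without carrying out either. That is precisely the content of the theorem; everything else in your write-up (greedy coloring, balancing pass, iteration) is routine. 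As written, the argument is circular in effect: it reduces the theorem to an unproved claim that is essentially equivalent to it.

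There is also a smaller but genuine flaw in your normalization. Appending isolated vertices to force $k \mid n$ does not obviously suffice: an equitable $k$-coloring of the augmented graph could place several of the added vertices in the same color class, and deleting them then leaves classes of $G$ whose sizes differ by more than one; since the added vertices are isolated, moving them between classes does not change the number of original vertices in each class, so this cannot be repaired after the fact. The standard fix is to append a disjoint clique $K_{k-r}$ (where $r \equiv |V(G)| \pmod{k}$, $0 < r < k$), which still has maximum degree at most $k-1$ and forces the added vertices into distinct classes. Both issues would need to be addressed before this could stand as a proof.
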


List coloring is another well-known variation on the classical vertex coloring problem, and it was introduced independently by Vizing~\cite{V76} and Erd\H{o}s, Rubin, and Taylor~\cite{ET79} in the 1970s.  For list coloring, we associate a \emph{list assignment} $L$ with a graph $G$ such that each vertex $v \in V(G)$ is assigned a list of colors $L(v)$ (we say $L$ is a list assignment for $G$).  The graph $G$ is \emph{$L$-colorable} if there exists a proper coloring $f$ of $G$ such that $f(v) \in L(v)$ for each $v \in V(G)$ (we refer to $f$ as a \emph{proper $L$-coloring} of $G$).  A list assignment $L$ is called a \emph{$k$-assignment} for $G$ if $|L(v)|=k$ for each $v \in V(G)$.  The \emph{list chromatic number} of a graph $G$, denoted $\chi_\ell(G)$, is the smallest $k$ such that $G$ is $L$-colorable whenever $L$ is a $k$-assignment for $G$.  We say $G$ is \emph{$k$-choosable} if $k \geq \chi_\ell(G)$.

Suppose that $L$ is a list assignment for a graph $G$.  The \emph{palette of colors associated with $L$} is $\cup_{v \in V(G)} L(v)$.  From this point forward, we use $\mathcal{L}$ to denote the palette of colors associated with $L$ whenever $L$ is a list assignment.  We say that $L$ is a \emph{constant $k$-assignment} for $G$ when $L$ is a $k$-assignment for $G$ and $|\mathcal{L}|=k$ (i.e., $L$ assigns the same list of $k$ colors to every vertex in $V(G)$).  Since $G$ must be $L$-colorable whenever $L$ is a constant $\chi_\ell(G)$-assignment for $G$, it is clear that $\chi(G) \leq \chi_\ell(G)$.

Since the focus of this note is the proportional choosability of complete bipartite graphs, it is worth briefly addressing the choosability of complete bipartite graphs.  First, it is well-known that complete bipartite graphs can be used to demonstrate that the list chromatic number and chromatic number of a graph can be arbitrarily far apart.  Indeed, $\chi(K_{n,m})=2$ and $\chi_\ell(K_{n,m}) = n+1$ whenever $m \geq n^n$.  It is also known that $(1/2 - o(1))\log_{2}(n) \leq \chi_\ell(K_{n,n}) \leq 2 + \log_{2}(n)$ where the $o(1)$-term tends to zero as $n$ tends to infinity (see~\cite{A92} and~\cite{A00}).  On the other hand, finding the exact list chromatic number of complete bipartite graphs is notoriously difficult.  In fact, the 3-choosable complete bipartite graphs were not fully characterized until 1995~\cite{O95} about 20 years after the introduction of list coloring (see section 8.2 in~\cite{W20} for some discussion).  Finally, there are many examples in the literature that study variants of list coloring and equitable variants of list coloring on complete bipartite graphs (see e.g.,~\cite{FK16, KN06, M18, M182, MC19}). 
 
\subsection{Proportional Choosability}

Kostochka, Pelsmajer, and West~\cite{KP03} introduced a list analogue of equitable coloring called equitable choosability in 2003, and this notion has received attention in the literature.  If $L$ is a $k$-assignment for a graph $G$, a proper $L$-coloring of $G$ is an \emph{equitable $L$-coloring} of $G$ if each color in $\mathcal{L}$ appears on at most $\lceil |V(G)|/k \rceil$ vertices.  We say $G$ is \emph{equitably $k$-choosable} if an equitable $L$-coloring of $G$ exists whenever $L$ is a $k$-assignment for $G$.  Importantly, notice that the definition of equitable $L$-coloring does not place a lower bound on how many times a color must be used, whereas in an equitable $k$-coloring there is both an upper and lower bound (i.e., $\lceil |V(G)|/k \rceil$ and $\lfloor |V(G)|/k \rfloor$) on how many times each color must be used.

Kaul, Pelsmajer, Reiniger, and the first author~\cite{KM19} introduced a new list analogue of equitable coloring called proportional choosability which places both an upper and lower bound on how many times a color must be used in a list coloring.  The study of proportional choosability is in its infancy, but it has received some attention in the literature (see~\cite{KM19, KM20, MP20}).  Now, following~\cite{KM19} we introduce the specifics.  Suppose that $L$ is a $k$-assignment for a graph $G$.  For each color $c \in \mathcal{L}$, the \emph{multiplicity of $c$ in $L$} is the number of vertices $v$ whose list $L(v)$ contains $c$.  The multiplicity of $c$ in $L$ is denoted by $\eta_L(c)$ (or simply $\eta(c)$ when the list assignment is clear).  So, $\eta_L(c)=\left\lvert{\{v \in V(G) : c \in L(v) \}}\right\rvert$.  A proper $L$-coloring $f$ of $G$ is a \emph{proportional $L$-coloring} of $G$ if for each $c \in \mathcal{L}$, $f^{-1}(c)$, the color class of $c$, is of size
$$ \left \lfloor \frac{\eta(c)}{k} \right \rfloor \; \; \text{or} \; \; \left \lceil \frac{\eta(c)}{k} \right \rceil.$$
We say $G$ is \emph{proportionally $L$-colorable} if a proportional $L$-coloring of $G$ exists, and we say $G$ is \emph{proportionally $k$-choosable} if $G$ is proportionally $L$-colorable whenever $L$ is a $k$-assignment for $G$.  Proportional choosability has some notable (perhaps surprising) properties.

\begin{pro} [\cite{KM19}] \label{pro: motivation}
If $G$ is proportionally $k$-choosable, then $G$ is both equitably $k$-choosable and equitably $k$-colorable.
\end{pro}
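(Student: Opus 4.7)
The claim breaks into two assertions: proportional $k$-choosability implies equitable $k$-choosability, and proportional $k$-choosability implies equitable $k$-colorability. Both reduce to the observation that the multiplicity $\eta_L(c)$ is always bounded above by $|V(G)|$ and equals $|V(G)|$ precisely when $L$ assigns $c$ to every vertex.

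For the first assertion, I would start with an arbitrary $k$-assignment $L$ for $G$ and use the hypothesis to produce a proportional $L$-coloring $f$. To show $f$ is also an equitable $L$-coloring, I need each color class $f^{-1}(c)$ with $c \in \mathcal{L}$ to satisfy $|f^{-1}(c)| \leq \lceil |V(G)|/k \rceil$. By definition of proportional coloring, $|f^{-1}(c)| \leq \lceil \eta_L(c)/k \rceil$, and since $\eta_L(c) \leq |V(G)|$ the ceiling is at most $\lceil |V(G)|/k \rceil$. This is the entire argument for the first half.

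For the second assertion, I would specialize to a constant $k$-assignment $L$ for $G$: pick any set $C$ of $k$ colors and set $L(v) = C$ for every $v \in V(G)$. By proportional $k$-choosability, there is a proportional $L$-coloring $f$. For each $c \in C = \mathcal{L}$ we have $\eta_L(c) = |V(G)|$, so the proportional condition forces $|f^{-1}(c)| \in \{\lfloor |V(G)|/k \rfloor, \lceil |V(G)|/k \rceil\}$, which is exactly the equitable $k$-coloring condition (noting also that $f$ uses only colors from $C$, so it is a proper $k$-coloring in the usual sense). Since $f$ is automatically proper, this produces the desired equitable $k$-coloring of $G$.

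There is no serious obstacle here; the proposition is essentially a definition-unpacking exercise, and the key mental hook is to record the simple monotonicity $\eta_L(c) \leq |V(G)|$ and the fact that equality holds for every color under a constant $k$-assignment. The only minor care needed is to include colors $c \in \mathcal{L}$ rather than $c \in C$ in the upper-bound check (for the equitable choosability half), but this is harmless because colors outside $\mathcal{L}$ are not used by $f$ at all.
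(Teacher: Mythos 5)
Your proof is correct. The paper states this proposition without proof, citing it from [KM19], so there is no in-paper argument to compare against; your two-part definition-unpacking argument (using $\eta_L(c) \leq |V(G)|$ for equitable choosability, and a constant $k$-assignment, under which $\eta_L(c) = |V(G)|$ for every color, for equitable colorability) is exactly the standard route one would take.
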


\begin{pro} [\cite{KM19}] \label{pro: monoink}
If $G$ is proportionally $k$-choosable, then $G$ is proportionally $(k+1)$-choosable.
\end{pro}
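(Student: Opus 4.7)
The plan is to start from a $(k+1)$-assignment $L$ for $G$ and shrink it to a $k$-assignment $L'$ by deleting, for each vertex $v$, one color $\phi(v) \in L(v)$ from $L(v)$. If $\phi$ is chosen carefully, applying the hypothesis to $L'$ will produce a proportional $L'$-coloring $f$ that is automatically a proportional $L$-coloring of $G$. The arithmetic underlying the ``carefully'' is elementary: writing $\eta = \eta_L(c)$, one checks by cases (separating by the residue of $\eta$ modulo $k+1$) that if $a \in \{\lfloor \eta/(k+1) \rfloor, \lceil \eta/(k+1) \rceil\}$, then both $\lfloor (\eta-a)/k \rfloor$ and $\lceil (\eta-a)/k \rceil$ still lie in $\{\lfloor \eta/(k+1) \rfloor, \lceil \eta/(k+1) \rceil\}$.

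With this in mind, the goal becomes to select $\phi$ so that, for every $c \in \mathcal{L}$, the count $a(c) := |\phi^{-1}(c)|$ belongs to $\{\lfloor \eta_L(c)/(k+1) \rfloor, \lceil \eta_L(c)/(k+1) \rceil\}$. Granting such a $\phi$, setting $L'(v) = L(v) \setminus \{\phi(v)\}$ yields a $k$-assignment with $\eta_{L'}(c) = \eta_L(c) - a(c)$. Proportional $k$-choosability of $G$ then produces a proportional $L'$-coloring $f$, and $|f^{-1}(c)| \in \{\lfloor \eta_{L'}(c)/k \rfloor, \lceil \eta_{L'}(c)/k \rceil\}$. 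By the arithmetic fact above, this set is contained in $\{\lfloor \eta_L(c)/(k+1) \rfloor, \lceil \eta_L(c)/(k+1) \rceil\}$, so $f$ is itself a proportional $L$-coloring of $G$, as required.

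The main obstacle is the construction of $\phi$. To handle it, I would form the bipartite graph $H$ with parts $V(G)$ and $\mathcal{L}$ and an edge joining $v$ to $c$ whenever $c \in L(v)$, and look for a spanning subgraph of $H$ in which each $v \in V(G)$ has degree exactly $1$ and each $c \in \mathcal{L}$ has degree in the interval $[\lfloor \eta_L(c)/(k+1) \rfloor, \lceil \eta_L(c)/(k+1) \rceil]$. Weighting every edge of $H$ by $1/(k+1)$ gives a fractional such subgraph, since the degree at $v$ is $|L(v)|/(k+1) = 1$ and the degree at $c$ is $\eta_L(c)/(k+1)$, which automatically lies in the prescribed interval. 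The integrality of bipartite degree-constrained subgraph problems (equivalently, integrality of max flow with integer lower and upper bounds on a standard auxiliary $s$-$t$ network) then upgrades this fractional solution to an integral one satisfying the same degree constraints, and the unique edge at $v$ in that integer subgraph is taken to be $\{v, \phi(v)\}$. Everything else in the argument is routine bookkeeping, so this rounding step is the only substantive ingredient.
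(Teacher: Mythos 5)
Your proof is correct: the case check showing that $\lfloor (\eta-a)/k\rfloor$ and $\lceil (\eta-a)/k\rceil$ stay in $\{\lfloor \eta/(k+1)\rfloor,\lceil \eta/(k+1)\rceil\}$ goes through, and the total unimodularity / integral-flow argument does produce the balanced deletion map $\phi$ (including the degenerate case where a color of multiplicity $1$ is deleted from its only list). The paper itself only cites this proposition from \cite{KM19}, but your argument is essentially the one given there: shrink the $(k+1)$-assignment to a $k$-assignment by removing each color $c$ from $\lfloor \eta(c)/(k+1)\rfloor$ or $\lceil \eta(c)/(k+1)\rceil$ lists and apply proportional $k$-choosability.
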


The \emph{proportional choice number} of a graph $G$, denoted $\chi_{pc}(G)$, is the smallest integer $k$ such that $G$ is proportionally $k$-choosable.  In light of Proposition~\ref{pro: monoink}, we know that a graph is proportionally $k$-choosable if and only if $k \geq \chi_{pc}(G)$.

\begin{pro} [\cite{KM19}] \label{lem: monotone}
Suppose $H$ is a subgraph of $G$.  If $G$ is proportionally $k$-choosable, then $H$ is proportionally $k$-choosable.  Consequently, $\chi_{pc}(G) \geq \chi_{pc}(H)$.
\end{pro}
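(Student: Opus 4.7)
The plan is to extend a $k$-assignment on $H$ to a $k$-assignment on $G$ using fresh colors, then restrict any proportional coloring guaranteed on $G$ back down to $H$. Specifically, let $L$ be a $k$-assignment on $H$, and define $L'$ on $V(G)$ by $L'(v)=L(v)$ for $v\in V(H)$, and for each $v\in V(G)\setminus V(H)$ let $L'(v)$ consist of $k$ brand-new colors chosen so that no such color lies in $\mathcal{L}$ and no color is shared between any two vertices of $V(G)\setminus V(H)$. This is clearly a $k$-assignment for $G$, so by hypothesis $G$ has a proportional $L'$-coloring $f$. I will then take $g=f|_{V(H)}$ as my candidate proportional $L$-coloring of $H$.

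The verification hinges on the following two observations. First, since no vertex outside $V(H)$ has any color of $\mathcal{L}$ on its list, we have $\eta_{L'}(c)=\eta_{L}(c)$ for every $c\in\mathcal{L}$. Second, and for the same reason, $f$ cannot assign any color of $\mathcal{L}$ to a vertex outside $V(H)$, so $f^{-1}(c)\subseteq V(H)$ for each $c\in\mathcal{L}$ and hence $g^{-1}(c)=f^{-1}(c)$. Combining these with the definition of proportional $L'$-coloring gives $|g^{-1}(c)|\in\{\lfloor \eta_{L}(c)/k\rfloor,\lceil \eta_{L}(c)/k\rceil\}$ for every $c\in\mathcal{L}$; properness of $g$ is immediate since $H$ is a subgraph of $G$ and $f$ is proper. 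The consequence $\chi_{pc}(G)\ge\chi_{pc}(H)$ then follows by taking $k=\chi_{pc}(G)$ in the implication just proved.

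There is no genuine obstacle here; the only care required is the bookkeeping around the fresh colors. In particular, one must assign \emph{distinct} fresh colors across different vertices of $V(G)\setminus V(H)$ (not merely fresh with respect to $\mathcal{L}$), since otherwise a fresh color $c$ assigned to two new vertices would have multiplicity $\eta_{L'}(c)\ge 2$, and the proportional constraint on $G$ would force $f$ to actually use $c$, which is harmless by itself, but it is simpler to guarantee $\eta_{L'}(c)\le 1$ for each fresh $c$ so that every coloring of the new vertices by their own fresh colors trivially satisfies the proportional constraint. With that bookkeeping in place the argument is purely definitional.
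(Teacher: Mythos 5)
Your proof is correct; the paper itself does not prove this proposition but cites it from~\cite{KM19}, and your argument (extend the $k$-assignment to $V(G)\setminus V(H)$ with pairwise-distinct fresh colors so that each has multiplicity $1$ and each color of $\mathcal{L}$ retains its multiplicity, then restrict a proportional coloring of $G$ back to $H$) is exactly the standard one used there. The bookkeeping you flag about keeping the fresh colors distinct is the only delicate point, and you handle it correctly.
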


Notice that Propositions~\ref{pro: monoink} and~\ref{lem: monotone} are particularly interesting since they do not hold in the contexts of equitable coloring and equitable choosability.  The purpose of this note is to study the proportional choice number of complete bipartite graphs.  In~\cite{KM19} it is shown that for any $m \in \N$, $\chi_{pc}(K_{1,m}) = 1 + \lceil m/2 \rceil$ and $\chi_{pc}(K_{m,m}) > m$.  Furthermore, it is shown that for any non-complete graph $G$, $\chi_{pc}(G) \leq |V(G)|-1$.  Combining these results along with Proposition~\ref{lem: monotone}, we obtain the following result.

\begin{thm} [\cite{KM19}] \label{thm: old}
Suppose $n, m \in \N$ satisfy $2 \leq n \leq m$.  Then,
$$\max \left \{ n+1, 1 + \left \lceil \frac{m}{2} \right \rceil \right \} \leq \chi_{pc}(K_{n,m}) \leq n + m - 1.$$
\end{thm}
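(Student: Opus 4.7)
The statement combines a lower bound and an upper bound on $\chi_{pc}(K_{n,m})$, and my plan is to obtain each as a short consequence of results already recorded in the preceding text together with the subgraph monotonicity Proposition~\ref{lem: monotone}. No new list-coloring argument should be required; the work is purely assembly.

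For the lower bound, write the two partite sets of $K_{n,m}$ as $A$ of size $n$ and $B$ of size $m$. First, restricting $A$ to a single vertex and keeping all of $B$ exhibits $K_{1,m}$ as a subgraph of $K_{n,m}$, so Proposition~\ref{lem: monotone} together with the cited value $\chi_{pc}(K_{1,m}) = 1 + \lceil m/2 \rceil$ gives
$$\chi_{pc}(K_{n,m}) \;\geq\; \chi_{pc}(K_{1,m}) \;=\; 1 + \left\lceil \frac{m}{2} \right\rceil.$$
Next, because $n \leq m$, keeping all of $A$ together with any $n$ vertices of $B$ exhibits $K_{n,n}$ as a subgraph of $K_{n,m}$, and the cited bound $\chi_{pc}(K_{n,n}) > n$ combined with Proposition~\ref{lem: monotone} yields $\chi_{pc}(K_{n,m}) \geq n+1$. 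Taking the maximum of the two estimates produces exactly the stated lower bound.

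For the upper bound, I would note that $n \geq 2$ forces the two vertices in $A$ to be non-adjacent, so $K_{n,m}$ is not a complete graph. The universal bound $\chi_{pc}(G) \leq |V(G)| - 1$ for non-complete $G$ (quoted earlier from \cite{KM19}) then applies and gives $\chi_{pc}(K_{n,m}) \leq n + m - 1$. There is no real obstacle in the proof itself; the genuine content sits upstream in \cite{KM19}, namely the exact evaluation $\chi_{pc}(K_{1,m}) = 1 + \lceil m/2 \rceil$ (where one must analyze lists on a star), the strict inequality $\chi_{pc}(K_{m,m}) > m$ (which needs the construction of a bad $m$-assignment), and the generic upper bound for non-complete graphs. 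I would not reprove any of these, and would simply cite them to close the argument.
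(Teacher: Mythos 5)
Your proposal is correct and matches the paper's own (sketched) argument exactly: the paper likewise obtains the lower bound by applying Proposition~\ref{lem: monotone} to the subgraphs $K_{1,m}$ and $K_{n,n}$ together with the cited facts $\chi_{pc}(K_{1,m}) = 1 + \lceil m/2 \rceil$ and $\chi_{pc}(K_{m,m}) > m$, and the upper bound from the bound $\chi_{pc}(G) \leq |V(G)|-1$ for non-complete graphs. Nothing is missing.
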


Note that we assume $n \geq 2$ in the statement of Theorem~\ref{thm: old} since the proportional choice number of stars is known.  In this note we will give an improvement on both the upper and lower bound in Theorem~\ref{thm: old}.

\subsection{Summary of Results and Open Questions}

In Section~\ref{main}, we prove the following.

\begin{thm} \label{thm: main}
Suppose $n, m \in \N$ satisfy $2 \leq n \leq m$.  Then,
$$\max \left \{ n+1, \left \lceil \frac{n}{2} \right \rceil + \left \lceil \frac{m}{2} \right \rceil \right \} \leq \chi_{pc}(K_{n,m}) \leq n+m-1 - \left \lfloor \frac{m}{3} \right \rfloor.$$
\end{thm}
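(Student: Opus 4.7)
The statement contains three inequalities. The bound $\chi_{pc}(K_{n,m}) \geq n+1$ is immediate from Theorem~\ref{thm: old}, so I would focus on the remaining lower bound $\chi_{pc}(K_{n,m}) \geq \lceil n/2 \rceil + \lceil m/2 \rceil$ and on the new upper bound $\chi_{pc}(K_{n,m}) \leq n+m-1-\lfloor m/3 \rfloor$.

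For the lower bound, set $k = \lceil n/2 \rceil + \lceil m/2 \rceil - 1$ and exhibit a $k$-assignment $L$ for $K_{n,m}$ that admits no proportional $L$-coloring. My first attempt would mimic the proof that $\chi_{pc}(K_{1,m}) > \lceil m/2 \rceil$: introduce a single shared color $\gamma$ and give each $u \in U$ the list $\{\gamma, \alpha_1, \ldots, \alpha_{k-1}\}$ and each $v \in V$ the list $\{\gamma, \beta_1, \ldots, \beta_{k-1}\}$, where the $\alpha_i$, $\beta_j$ and $\gamma$ are all distinct. Then $\eta(\alpha_i) = n$, $\eta(\beta_j) = m$ and $\eta(\gamma) = n + m$, giving explicit permitted class sizes. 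I would argue that the $k-1$ $\alpha$-classes together cannot cover all of $U$ and, symmetrically, the $k-1$ $\beta$-classes cannot cover all of $V$, so $\gamma$'s color class must contain a vertex from each of $U$ and $V$; this contradicts the fact that every color class of a proper coloring of $K_{n,m}$ lies entirely in one side of the bipartition.

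For the upper bound, given a $k$-assignment $L$ with $k = n + m - 1 - \lfloor m/3 \rfloor$, I would construct a proportional $L$-coloring directly. A quick calculation shows every allowed class size lies in $\{0, 1, 2\}$, and that the number of size-$2$ classes needed is exactly $1 + \lfloor m/3 \rfloor$. The plan is: (i) locate $\lfloor m/3 \rfloor$ pairwise vertex-disjoint pairs $\{v, v'\} \subseteq V$ together with $\lfloor m/3 \rfloor$ distinct colors $c_{v,v'} \in L(v) \cap L(v')$; (ii) color each pair with its shared color, producing $\lfloor m/3 \rfloor$ size-$2$ classes inside $V$; (iii) color the remaining $m - 2 \lfloor m/3 \rfloor$ vertices of $V$ and the $n$ vertices of $U$ using distinct colors drawn from their lists via a Hall-type argument; and (iv) verify that every class's size lies in $\{\lfloor \eta(c)/k \rfloor, \lceil \eta(c)/k \rceil\}$.

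The main obstacle on the lower-bound side is that the single-$\gamma$ construction can actually be satisfied when $m$ is much larger than $n$, since then the $k - 1$ $\beta$-classes are numerous enough to absorb all of $V$ on their own and $\gamma$ can be placed wholly in $U$ (or vice versa). I expect that refining the construction---for instance by allowing a few $\alpha$-colors to appear in a controlled number of $V$-lists, so that the relevant multiplicities cross a threshold and the class-size ranges tighten---will be needed to rule out every allocation; engineering these multiplicities is the heart of the proof. For the upper bound, the crux is step (i): guaranteeing $\lfloor m/3 \rfloor$ disjoint sharing pairs in $V$ for every $L$. I expect this to fall out of a pigeonhole on the palette used by the $V$-lists: if too few pairs of $V$-vertices share a color then the $V$-lists are nearly disjoint, which makes the coloring easy by different means; otherwise the required matching can be extracted greedily.
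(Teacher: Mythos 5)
There are genuine gaps on both sides of the inequality. For the lower bound, your single-$\gamma$ construction does fail, and not only when $m\gg n$: take $n=2$, $m=10$, so $k=5$. Then each $\beta_j$ has $\eta(\beta_j)=10=2k$ and must be used exactly twice, so the four $\beta$-colors cover eight vertices of $V$; $\gamma$ (with $\eta(\gamma)=12$, class size $2$ or $3$) takes the remaining two vertices of $V$; and two of the $\alpha$-colors handle $U$. You acknowledge a refinement is needed but do not supply it, so this half remains unproved. The paper's route is entirely different and avoids engineering any list assignment: it proves the stronger multipartite bound of Theorem~\ref{pro: tpartitelower} by shrinking each even part by one vertex so that all parts are odd, noting the resulting $K_{n_1',\ldots,n_t'}$ has $2s-t\le 2(s-1)$ vertices, so an equitable $(s-1)$-coloring would use classes of size at most $2$ while the odd parts force at least $\sum_i\lceil n_i'/2\rceil=s$ classes; Theorem~\ref{thm: wu} then shows this graph is not even equitably $(s-1)$-colorable, and Propositions~\ref{pro: motivation} and~\ref{lem: monotone} finish. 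The obstruction lives already in the constant list assignment, which is why no clever multiplicity engineering is needed.

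For the upper bound, two concrete problems. First, your claim that ``the number of size-$2$ classes needed is exactly $1+\lfloor m/3\rfloor$'' is false: if no color $c$ has $\eta(c)>k$, a rainbow proper $L$-coloring exists by Hall's theorem and is already proportional with every class of size at most $1$ (this is the paper's Corollary~\ref{co: leq_k}). What is actually forced is that every color of high multiplicity satisfies $\lfloor\eta(c)/k\rfloor=1$ and hence must receive a \emph{nonempty} class; correspondingly, your disjoint pairs in step (i) are useless unless the shared color has $\eta(c)>k$, since a pair sharing a color with $\eta(c)\le k$ cannot legally form a size-$2$ class. Second, step (i) is precisely where the difficulty lies, and ``pigeonhole, otherwise greedy'' does not resolve it: the hard instances are those with a moderate number of colors of high multiplicity that cannot all be placed as pairs inside $V$. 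The paper handles this with a three-way split --- no colors of high multiplicity (Corollary~\ref{co: leq_k}), at least $(m-d)/2$ such colors (Lemma~\ref{lem: upper bound Knm}, which pairs them onto vertices of $V$ but needs several subcases for constant versus non-constant assignments and the parity of $m-d$), and a minimal-counterexample argument on the number of colors of high multiplicity for the remaining range (Lemma~\ref{Knm m+n-d-1}), which replaces one high-multiplicity color by a fresh color on $m-d$ lists and then repairs the resulting coloring by explicit recolorings. None of this structure is present in the proposal, so the upper bound is also not established.
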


With the properties of the list chromatic number of complete bipartite graphs mentioned at the end of Subsection~\ref{basic} in mind, Theorem~\ref{thm: main} demonstrates that the proportional choice number of complete bipartite graphs behaves quite differently than the list chromatic number.  We prove Theorem~\ref{thm: main} by first proving the lower bound.  To prove the lower bound, we prove the following lower bound on the proportional choice number of complete $t$-partite graphs.

\begin{thm} \label{pro: tpartitelower}
Suppose $G = K_{n_1, \ldots, n_t}$ with $t, n_1, \ldots, n_t \in \N$ and $t \geq 2$. Suppose $s = \sum_{i=1}^t \lceil n_i/2 \rceil$. Then, $G$ is not proportionally $(s-1)$-choosable. Consequently, $\chi_{pc}(K_{n_1,\ldots,n_t}) \geq \sum_{i=1}^t \lceil n_i/2 \rceil$.
\end{thm}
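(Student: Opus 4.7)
My plan is to avoid constructing a bad list assignment directly and instead chain together Propositions~\ref{pro: motivation} and~\ref{lem: monotone}. First I would pass to a subgraph $H \subseteq G$ in which every part has odd size, next show $H$ admits no equitable $(s-1)$-coloring, and then let the two propositions carry the conclusion back to $G$.

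For the reduction, set $n'_i = 2\lceil n_i/2\rceil - 1$ for each $i$; this is the largest odd integer satisfying $\lceil n'_i/2\rceil = \lceil n_i/2\rceil$. Then $n'_i \le n_i$, so $H := K_{n'_1,\ldots,n'_t}$ is a subgraph of $G$, while $\sum_i \lceil n'_i/2\rceil = s$ and $|V(H)| = 2s - t$. The core step is then to rule out equitable $(s-1)$-colorings of $H$. In such a coloring the $s-1$ color classes would each lie inside a single part and have sizes in $\{\lfloor (2s-t)/(s-1)\rfloor,\ \lceil (2s-t)/(s-1)\rceil\}$. Using $s \ge t$ (immediate from $\lceil n_i/2\rceil \ge 1$), one checks this set equals $\{2\}$ when $t=2$ and $\{1,2\}$ when $t \ge 3$. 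When $t=2$ an odd-sized part cannot be partitioned into classes of size $2$; when $t \ge 3$, solving $a+b = s-1,\ a+2b = 2s-t$ forces exactly $t-2$ singleton classes in total, but since each $n'_i$ is odd, each part must host an odd (hence positive) number of singletons, demanding at least $t$ of them---a contradiction.

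Hence $H$ is not equitably $(s-1)$-colorable, so by the contrapositive of Proposition~\ref{pro: motivation} it is not proportionally $(s-1)$-choosable, and by Proposition~\ref{lem: monotone} neither is $G$. The main obstacle to watch is the parity/arithmetic bookkeeping in the equitability step---particularly verifying the ranges of $(2s-t)/(s-1)$ in the two cases and the count of singleton classes---since once that calculation is in hand the theorem follows by two direct applications of the stated propositions.
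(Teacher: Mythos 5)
Your proposal is correct, and its skeleton is identical to the paper's: pass to the subgraph $H = K_{n'_1,\ldots,n'_t}$ with every part of odd size $n'_i = 2\lceil n_i/2\rceil - 1$ (so that $\sum_i \lceil n'_i/2\rceil = s$ and $|V(H)| = 2s-t$), show $H$ is not equitably $(s-1)$-colorable, and then pull the conclusion back to $G$ via the contrapositives of Propositions~\ref{pro: motivation} and~\ref{lem: monotone}. The only genuine difference is how the equitable-coloring step is discharged. The paper cites Wu's characterization of equitably $s$-colorable complete multipartite graphs (Theorem~\ref{thm: wu}) and verifies that one of its inequalities fails, namely $\sum_i \lceil n'_i/\lceil |V(H)|/(s-1)\rceil\rceil \geq s > s-1$; you instead prove the needed instance from scratch by a parity count: all $s-1$ classes have size in $\{\lfloor(2s-t)/(s-1)\rfloor, \lceil(2s-t)/(s-1)\rceil\}$, which (using $s \geq t$) is $\{2\}$ for $t=2$ and $\{1,2\}$ for $t \geq 3$; in the latter case solving $a+b=s-1$, $a+2b=2s-t$ forces exactly $t-2$ singleton classes, while each odd part must contain an odd, hence positive, number of singletons, giving at least $t$. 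Your arithmetic checks out (including the fact that $1 < (2s-t)/(s-1) < 2$ when $t \geq 3$, and that empty classes are excluded by equitability). What your route buys is self-containedness --- the result no longer depends on a hard-to-access master's thesis --- at the cost of a page of bookkeeping that the paper outsources; the paper's route is shorter but leans on Theorem~\ref{thm: wu} as a black box.
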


The lower bound in Theorem~\ref{thm: main} is then implied by Theorems~\ref{thm: old} and~\ref{pro: tpartitelower}.  We also demonstrate that the bound in Theorem~\ref{pro: tpartitelower} does not hold with equality for certain complete multipartite graphs by proving the following.

\begin{pro} \label{pro: even-complete-t-partite}
Suppose $t, n_1, \ldots, n_t \in \N$, $t \geq 2$, and $n_i$ is even for each $i \in [t]$. Let $s = \sum_{i=1}^t n_i/2$. If $\max_{i \in [t]} n_i \leq s$, then $K_{n_1, \ldots, n_t}$ is not proportionally $s$-choosable. Consequently, $\chi_{pc}(K_{n_1, \ldots, n_t}) \geq 1 + \sum_{i=1}^t n_i/2$.
\end{pro}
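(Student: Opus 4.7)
The plan is to explicitly construct an $s$-assignment $L$ for $G = K_{n_1, \ldots, n_t}$ that admits no proportional $L$-coloring. The construction naturally extends the one used to prove $\chi_{pc}(K_{m,m}) > m$ in \cite{KM19}, exploiting the evenness of each $n_i$.

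\emph{Setup.} Relabel so that $n_1 = \max_{i \in [t]} n_i$, and work over a palette $\{1, 2, \ldots, s+1\}$. Define $L$ by $L(v) = \{2, 3, \ldots, s+1\}$ for $v \in V_1$ and $L(v) = \{1, 2, \ldots, s\}$ for $v$ in any other partite set. A direct count gives $\eta(1) = 2s - n_1$, $\eta(s+1) = n_1$, and $\eta(j) = 2s$ for every $j \in \{2, \ldots, s\}$. Since $\eta(j)/s = 2$ for these intermediate colors, any proportional $L$-coloring $f$ must satisfy $|f^{-1}(j)| = 2$ for $j \in \{2, \ldots, s\}$. Combined with $\sum_c |f^{-1}(c)| = 2s$, this forces $|f^{-1}(1)| + |f^{-1}(s+1)| = 2$, so we only need to rule out the subcases $(1, 1)$ and $(2, 0)$.

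\emph{The $(1,1)$ subcase via parity.} The unique vertex colored $s+1$ must lie in $V_1$, since $V_1$ is the only part whose vertices list $s+1$. Deleting that vertex leaves $n_1 - 1$ vertices in $V_1$ that must be partitioned into monochromatic color classes of size exactly $2$ (drawn from $\{2, \ldots, s\}$), each class entirely within $V_1$ because color classes of a proper coloring of a complete multipartite graph are confined to a single partite set. Since $n_1$ is even, $n_1 - 1$ is odd, so no such partition exists. A symmetric argument placing the color-$1$ singleton in some $V_i$ with $i \geq 2$ and invoking the evenness of $n_i$ yields the same contradiction.

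\emph{The $(2,0)$ subcase and the main obstacle.} When $n_1 = s$ the $(2, 0)$ subcase is vacuous (then $\eta(s+1) = s$ forces $|f^{-1}(s+1)| = 1$), which already recovers the $K_{s,s}$ result and, more generally, closes the argument whenever $\max n_i = s$. The genuine difficulty arises when $n_1 < s$: the naive construction above actually does admit a proportional $L$-coloring in this subcase, so it has to be refined. My plan is to transfer part of the ``shift'' to a second partite set $V_{i_0}$ with $i_0 \neq 1$, for instance by replacing its list with $\{1, 2, \ldots, s-1, s+1\}$ while keeping $L$ as before on $V_1$ and on the remaining parts. In the modified assignment, color $s+1$ now has positive multiplicity in both $V_1$ and $V_{i_0}$, while the rebalanced multiplicities $\eta(s), \eta(s+1), \eta(1)$ continue to force the analogues of the $(1,1)$ and $(2,0)$ subcases; the single-partite-set constraint on color classes together with the evenness of $n_1$ and $n_{i_0}$ then rules out every remaining configuration by the same parity obstruction as in Step~3.

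The heart of the proof is the parity contradiction in the $(1,1)$ subcase; the main technical difficulty is making the refinement in the $(2,0)$ subcase uniform across all admissible tuples with $\max n_i \leq s$. I expect the cleanest write-up to split into the cases $\max n_i = s$ and $\max n_i < s$, using the direct construction above in the former and a tailored double-shift variant in the latter, each reduced to a short case analysis on where the forced singleton color classes can lie.
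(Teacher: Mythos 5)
There is a genuine gap, and it sits exactly where you flagged the ``main technical difficulty'': the case $\max_{i} n_i < s$. Your naive construction and its $(1,1)$ parity analysis are fine, but the proposed ``double-shift'' refinement does not close the remaining case. Take $G = K_{2,2,2}$, so $s = 3$, with partite sets $V_1 = \{a,b\}$, $V_2 = \{c,d\}$, $V_3 = \{e,f\}$ and (following your refinement with $i_0 = 2$) lists $L(a)=L(b)=\{2,3,4\}$, $L(c)=L(d)=\{1,2,4\}$, $L(e)=L(f)=\{1,2,3\}$. Then $\eta(1)=\eta(3)=\eta(4)=4$ and $\eta(2)=6$, so colors $1,3,4$ may each be used once or twice and color $2$ must be used exactly twice. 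The coloring $f(e)=f(f)=1$, $f(c)=f(d)=2$, $f(a)=3$, $f(b)=4$ is a proportional $L$-coloring, so the refined assignment is not a counterexample. The structural reason is that any color whose multiplicity exceeds $s$ is permitted a class of size two, and such a class can hide inside \emph{any} partite set that sees that color; with only one or two ``shifted'' colors you cannot prevent this once $\max_i n_i < s$.

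The paper's construction repairs exactly this defect by giving \emph{every} partite set its own private color: $L(u_{i,j}) = [s-1] \cup \{s-1+i\}$. Now there are only $s-1$ common colors, each of multiplicity $2s$ and hence used exactly twice, which accounts for $2s-2$ vertices; every remaining color has multiplicity $n_i \leq s$ (this is precisely where the hypothesis $\max_i n_i \leq s$ enters) and so may be used at most once. Your parity observation then does the rest, but applied to \emph{all} partite sets simultaneously: if the two leftover vertices lay in different parts $A_k, A_\ell$, then $A_k$ minus one vertex would have odd size $n_k - 1$ yet be partitioned into size-two classes, a contradiction; so both leftover vertices lie in a single $A_j$, forcing both to receive the single private color $s-1+j$, which may be used at most once. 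I suggest you replace your two-case structure ($\max n_i = s$ versus $\max n_i < s$) with this uniform one-private-color-per-part assignment; your parity argument survives essentially verbatim and the problematic $(2,0)$-type configurations disappear because no non-common color is shared between two partite sets.
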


It is worth mentioning that Proposition~\ref{pro: even-complete-t-partite} generalizes a result in~\cite{KM19} which says that $\chi_{pc}(K_{2*m}) \geq m+1$ whenever $m \geq 2$, where $K_{2*m}$ denotes the equivalence class of complete $m$-partite graphs where each partite set is of size 2.   

We finish Section~\ref{main} by using matching theory and a result from~\cite{KM19} to prove the upper bound in Theorem~\ref{thm: main}.  Theorem~\ref{thm: main} says something about an open question related to Theorem~\ref{thm: HS}.

\begin{ques} [\cite{KM19}] \label{ques: HS}
For any graph $G$, is $G$ proportionally $k$-choosable whenever $k \geq \Delta(G) + 1$? 
\end{ques}

Theorem~\ref{thm: main} tells us that $K_{3,m}$ is proportionally $(m+1)$-choosable whenever $m \geq 3$ (notice that this is not implied by Theorem~\ref{thm: old}).  More generally, Theorem~\ref{thm: main} implies that the answer to Question~\ref{ques: HS} is yes when we restrict our attention to complete bipartite graphs $K_{n,m}$ satisfying $2 \leq n \leq m$ and $n \leq \lfloor m/3 \rfloor + 2$.

Interestingly, we have not found any examples for which our lower bound in Theorem~\ref{thm: main} does not hold with equality.  Consequently, the following question is open.

\begin{ques} \label{ques: lower}
Is $\chi_{pc}(K_{n,m}) = \max \left \{ n+1, \left \lceil n/2 \right \rceil + \left \lceil m/2 \right \rceil \right \}$ whenever $n,m \in \N$ and $m \geq n$?
\end{ques}  

The following related question about the proportional choice number of complete multipartite graphs is also open.

\begin{ques} \label{ques: multipartite}
Suppose $G = K_{n_1, \ldots, n_t}$ with $t, n_1, \ldots, n_t \in \N$ and $t \geq 2$.  If $G$ does not satisfy the hypotheses of Proposition~\ref{pro: even-complete-t-partite}, is it the case that $\chi_{pc}(G) = \sum_{i=1}^t \lceil n_i/2 \rceil$?  If $G$ satisfies the hypotheses of Proposition~\ref{pro: even-complete-t-partite}, is it the case that $\chi_{pc}(G) = 1 + \sum_{i=1}^t  n_i/2 $? 
\end{ques}    

The following question about the asymptotics of $\chi_{pc}(K_{n,m})$ is also open.

\begin{ques} \label{ques: limit}
Suppose $n$ is a fixed natural number.  Is it the case that 
$$ \lim_{m \rightarrow \infty} \frac{\chi_{pc}(K_{n,m})}{m} = \frac{1}{2} ?$$
\end{ques}

It should be noted that since $\chi_{pc}(K_{1,m}) = 1 + \lceil m/2 \rceil$, we know that the answer to both Question~\ref{ques: lower} and Question~\ref{ques: limit} is yes when $n=1$.  Also, Theorem~\ref{thm: main} implies that for fixed $n \in \N$, $1/2 \leq \liminf_{m \rightarrow \infty} \chi_{pc}(K_{n,m})/m \leq \limsup_{m \rightarrow \infty} \chi_{pc}(K_{n,m})/m \leq 2/3.$   

\section{Proof of Theorem~\ref{thm: main}} \label{main}

\subsection{Lower Bound}

We begin by proving Theorem~\ref{pro: tpartitelower}.  First, we need a result from 1994 on the equitable coloring of complete multipartite graphs.

\begin{thm} [\cite{W94}] \label{thm: wu}
For any $K_{n_1,\ldots,n_t}$ with $t, n_1, \ldots, n_t \in \N$, let $p = n_1 + n_2 + \ldots + n_t$. Then $K_{n_1,n_2,\ldots,n_t}$ is equitably $s$-colorable if and only if either $s > p$ or $n_i \geq \lceil n_i / \lceil p / s \rceil \rceil \lfloor p/s \rfloor$ for all $i \in [t]$ and $\sum_{i=1}^t \lfloor n_i / \lfloor p / s \rfloor \rfloor \geq s \geq \sum_{i=1}^t \lceil n_i / \lceil p / s \rceil \rceil$ when $s \leq p$.
\end{thm}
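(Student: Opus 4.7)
The plan is to reformulate the existence of an equitable $s$-coloring as the existence of an integer tuple $(c_1,\ldots,c_t)$ (with $c_i$ the number of colors used on the $i$th part) subject to simple interval and sum constraints, and then match those constraints against the two conditions in the theorem.

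The easy case $s>p$ is handled by putting each vertex in its own class: all $s$ classes have size $0$ or $1$, so they are equitable. For $s \leq p$, set $q = \lfloor p/s \rfloor$ and $Q = \lceil p/s \rceil$, and let $r = p - qs$. In any equitable $s$-coloring every class has size $q$ or $Q$; moreover exactly $r$ classes have size $Q$ and $s-r$ have size $q$. Because every color class is an independent set and independent sets in $K_{n_1,\ldots,n_t}$ live inside a single part, the coloring decomposes as, for each $i$, a partition of $V_i$ into $a_i$ classes of size $Q$ and $b_i$ classes of size $q$. Writing $c_i = a_i + b_i$ and using $a_i Q + b_i q = n_i$, the constraints $0 \leq a_i \leq c_i$ become
\[
\left\lceil \frac{n_i}{Q} \right\rceil \;\leq\; c_i \;\leq\; \left\lfloor \frac{n_i}{q} \right\rfloor,
\]
while $\sum_i a_i = p - sq = r$ is automatic from $\sum_i c_i = s$. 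So an equitable $s$-coloring exists iff one can choose integers $c_i$ in these intervals summing to $s$.

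With this reformulation, the two stated conditions fall out cleanly. The first, $n_i \geq \lceil n_i/Q \rceil \lfloor p/s \rfloor$, is exactly the nonemptiness of the interval $[\lceil n_i/Q \rceil, \lfloor n_i/q \rfloor]$ (since an integer bound $\lceil n_i/Q \rceil \leq n_i/q$ is equivalent to $\lceil n_i/Q \rceil \leq \lfloor n_i/q \rfloor$). The second, $\sum_i \lceil n_i/Q \rceil \leq s \leq \sum_i \lfloor n_i/q \rfloor$, is exactly the solvability of the sum constraint once the intervals are nonempty: start from the tuple $c_i = \lceil n_i/Q \rceil$, whose sum is at most $s$, and repeatedly increment any coordinate still below its upper bound; the upper bound $\sum \lfloor n_i/q \rfloor \geq s$ guarantees that $s$ is reached. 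The necessity of both conditions is immediate from the decomposition.

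Only a modest amount of care is needed at the boundary cases. When $r = 0$, one has $Q = q$, both conditions collapse to $q \mid n_i$ for every $i$ (forcing $a_i = 0$ and $c_i = n_i/q$), and the sum condition is automatic from $\sum n_i = p = qs$. The main obstacle in writing this out carefully is keeping the bookkeeping with $Q$, $q$, and $r$ straight across the cases $r = 0$ and $r > 0$, and verifying that the integer-interval argument in the sufficiency step produces the right total number of size-$Q$ classes; both are bookkeeping rather than any deeper combinatorial difficulty.
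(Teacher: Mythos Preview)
The paper does not supply a proof of this statement: Theorem~\ref{thm: wu} is quoted from Wu's thesis~\cite{W94} and used as a black box in the proof of Theorem~\ref{pro: tpartitelower}. So there is no ``paper's own proof'' to compare against.

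That said, your argument is correct and is essentially the natural one. The key observation---that independent sets in a complete multipartite graph lie inside a single part, so an equitable $s$-coloring is precisely a simultaneous partition of each $V_i$ into $a_i$ blocks of size $Q=\lceil p/s\rceil$ and $b_i$ blocks of size $q=\lfloor p/s\rfloor$ with $\sum_i(a_i+b_i)=s$---immediately reduces the problem to an integer-interval feasibility question, and your identification of the two stated conditions with (i) nonemptiness of each interval $[\lceil n_i/Q\rceil,\lfloor n_i/q\rfloor]$ and (ii) $s$ lying between the sum of lower endpoints and the sum of upper endpoints is exactly right. The derivation $a_i=n_i-c_iq$ when $Q=q+1$, the automatic satisfaction of $\sum a_i=r$ from $\sum c_i=s$, and the incrementing argument for sufficiency are all sound. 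Your handling of the degenerate case $r=0$ (where $Q=q$ and the first condition forces $q\mid n_i$) is also correct. No gaps.
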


We are now ready to prove Theorem~\ref{pro: tpartitelower}. 

\begin{proof}
For all $i\in[t]$, we let
$$
n'_i = \left\{
        \begin{array}{ll}
            n_i & \quad \text{if } n_i \text{ is odd} \\
            n_i - 1 & \quad \text{if } n_i \text{ is even.}
        \end{array}
    \right.
$$
Note that $s = \sum_{i=1}^t \lceil n_i / 2 \rceil = \sum_{i=1}^t \lceil n'_i / 2 \rceil$. Consider the graph $G' = K_{n'_1, \ldots, n'_t}$. Note that since $n'_i$ is odd for all $i \in [t]$, we have that $|V(G')| = 2s-t$.  Since $t \geq 2$, we know $|V(G')| \leq 2(s - 1)$ which implies that $\lceil |V(G')| / (s-1) \rceil \leq 2$. As such, $\sum_{i=1}^t \lceil n'_i / \lceil |V(G')| / (s-1) \rceil \rceil \geq \sum_{i=1}^t \lceil n'_i / 2 \rceil = s > s - 1$. Theorem~\ref{thm: wu} tells us $G'$ is not equitably $(s-1)$-colorable.  Proposition~\ref{pro: motivation} then implies that $G'$ is not proportionally $(s-1)$-choosable. As $G$ contains a copy of $K_{n'_1,\ldots,n'_t}$ as a subgraph, Proposition~\ref{lem: monotone} implies $G$ is not proportionally $(s-1)$-choosable.
\end{proof}

Notice that the lower bound in Theorem~\ref{thm: main} is implied by Theorems~\ref{thm: old} and~\ref{pro: tpartitelower}.  We end this Subsection by proving Proposition~\ref{pro: even-complete-t-partite} which demonstrates that the lower bound in Theorem~\ref{pro: tpartitelower} does not hold with equality for all complete multipartite graphs.

\begin{proof}
Let $G = K_{n_1, \ldots, n_t}$ with partite sets $A_i = \{u_{i,1},\ldots, u_{i,n_i}\}$ for each $i \in [t]$. We will now construct an $s$-assignment $L$ for $G$ for which there is no proportional $L$-coloring. Let $L$ be defined by $L(u_{i,j}) = [s-1] \cup \{s - 1 + i\}$ for each $i \in [t]$ and $j \in [n_i]$. Since $\max_{i \in [t]} n_i \leq s$, it follows that $\eta(c) \leq s$ for each $c \in \{s, \ldots, s - 1 + t\}$. Thus, a proportional $L$-coloring of $G$ may not use any color $c \in \{s, \ldots, s - 1 + t\}$ more than once. Moreover, we have that $\eta(c) = 2s$ for each $c \in [s-1]$, and so a proportional $L$-coloring of $G$ must use each color $c \in [s-1]$ exactly twice.

For the sake of contradiction, suppose that $f$ is a proportional $L$-coloring of $G$. Notice that $|V(G)| = 2s$ and $|f^{-1}([s-1])| = 2s - 2$, and so $|f^{-1}(\mathcal{L} - [s - 1])| = 2$. Also, for each color $c \in [s - 1]$, we have that $f^{-1}(c) \subseteq A_i$ for some $i \in [t]$; otherwise, we would have that vertices $u, v \in f^{-1}(c)$ are adjacent, which would imply that $f$ is not proper. In addition, we claim that $f^{-1}(\mathcal{L} - [s-1]) \subseteq A_j$ for some $j \in [t]$. To see this, notice that if $u, v \in f^{-1}(\mathcal{L} - [s-1])$ were in different partite sets $A_k, A_\ell$ respectively, then $A_{k} - \{u\}$, which is a subset of $f^{-1}([s-1])$, would be of odd size $n_k - 1$. However, this would imply that there exists a color $c \in f(A_k - \{u\})$ such that $|f^{-1}(c)| = 1$, which is a contradiction. Hence, if we let $u, v$ be the elements of $f^{-1}(\mathcal{L} - [s-1])$, then we have that $L(u) = L(v) = [s-1] \cup \{s - 1 + j\}$ for some $j \in [t]$. However, this implies that $f(u) = f(v) = s - 1 + j$. Thus, it follows that $|f^{-1}(s-1+j)| = 2$, which is a contradiction. Hence, there is no proportional $L$-coloring of $G$, and we have that $G$ is not proportionally $s$-colorable.
\end{proof}

\subsection{Upper Bound}

We begin with a definition.  Suppose $L$ is a $k$-assignment for a graph $G$. We say $c \in \mathcal{L}$ is a \emph{color of high multiplicity with respect to} $L$~\footnote{We sometimes omit ``with respect to $L$" when the list assignment is clear from context.} if $\eta(c) > k$.  Intuitively speaking, our strategy for proving the upper bound in Theorem~\ref{thm: main} is as follows: first take care of list assignments with no colors of high multiplicity, then take care of list assignments with many colors of high multiplicity, and finally take care of all remaining list assignments.

Dealing with list assignments with no colors of high multiplicty will require some matching theory.  So, we review some concepts.  A \emph{matching} in a graph $G$ is a set of edges with no shared endpoints.  If $M$ is a matching in $G$, and $X \subseteq V(G)$ such that each vertex in $X$ is an endpoint of an edge in $M$, we say that $X$ is \emph{saturated} by $M$.    The following classical result was proved by Hall in 1935.

\begin{thm}[\cite{H35}, {\bf Hall's Theorem}] \label{thm: Halls}
Suppose $B$ is a bipartite multigraph with bipartition $X, Y$.  Then, $B$ has a matching that saturates $X$ if and only if $|N_B(S)| \geq |S|$ for all $S \subseteq X$.
\end{thm}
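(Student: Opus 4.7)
The plan is to prove the two directions of the biconditional separately. The forward direction is immediate: given a matching $M$ that saturates $X$ and any $S \subseteq X$, the edges of $M$ incident to $S$ have $|S|$ distinct $Y$-endpoints, all lying in $N_B(S)$, so $|N_B(S)| \geq |S|$.

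For the reverse direction, I would proceed by strong induction on $|X|$, assuming throughout that Hall's condition $|N_B(S)| \geq |S|$ holds for every $S \subseteq X$. The base case $|X|=1$ is trivial: the condition applied to $X$ itself produces an edge incident to the lone vertex. For the inductive step I would split into two cases according to whether Hall's condition is ever \emph{tight} in the interior. In the strict case, where $|N_B(S)| \geq |S|+1$ for every non-empty proper $S \subsetneq X$, I pick any $x \in X$, any neighbor $y$ of $x$, match the pair, and delete both vertices from $B$; the strict slack ensures that Hall's condition still holds for $X \setminus \{x\}$ in the reduced multigraph, so induction finishes the argument. In the tight case, some non-empty proper $S \subsetneq X$ satisfies $|N_B(S)| = |S|$; I would apply induction to the sub-bipartite-multigraph on $S \cup N_B(S)$ to obtain a matching $M_1$ saturating $S$, then apply induction a second time to the bipartite multigraph on $(X \setminus S) \cup (Y \setminus N_B(S))$ to obtain a matching $M_2$ saturating $X \setminus S$. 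The union $M_1 \cup M_2$ then saturates $X$.

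The main obstacle is verifying that Hall's condition transfers to the reduced multigraph in the tight case, i.e., that every $T \subseteq X \setminus S$ has at least $|T|$ neighbors inside $Y \setminus N_B(S)$. This I would derive by applying Hall's condition to $T \cup S$ in the original multigraph: $|N_B(T \cup S)| \geq |T| + |S|$, while $N_B(T \cup S) = N_B(S) \cup N_B(T)$ splits into $N_B(S)$ (of size exactly $|S|$ by tightness) together with the part of $N_B(T)$ outside $N_B(S)$, so that $|N_B(T) \setminus N_B(S)| \geq |T|$, as required. The passage from simple graphs to multigraphs costs nothing, since parallel edges cannot both participate in a matching, and so the induction goes through verbatim.
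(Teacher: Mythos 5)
The paper does not prove this statement; it is quoted as a classical result with a citation to Hall's 1935 paper, so there is no internal proof to compare against. Your argument is the standard Halmos--Vaughan induction (splitting on whether Hall's condition is tight on some non-empty proper subset), and it is correct as written: the forward direction, the base case, the strict case with the deletion of a matched pair, and the verification that Hall's condition transfers to $(X \setminus S) \cup (Y \setminus N_B(S))$ via $|N_B(T \cup S)| \geq |T| + |S|$ are all sound, and your remark that parallel edges are irrelevant to matchings correctly handles the multigraph setting.
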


In order to take care of list assignments with no colors of high multiplicity, we need two lemmas.  The first of these lemmas follows easily from Hall's Theorem.

\begin{lem} \label{lem: leq_k}
Suppose $L$ is a $k$-assignment for a graph $G$. Suppose that, for each color $c \in \mathcal{L}$, $\eta(c) \leq k$. Then, there is a proper $L$-coloring of $G$ with range of size $|V(G)|$.
\end{lem}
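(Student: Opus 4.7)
The plan is to view the desired coloring as a system of distinct representatives. An $L$-coloring whose range has size $|V(G)|$ is exactly an injection $f:V(G)\to\mathcal{L}$ with $f(v)\in L(v)$ for each $v$; such an injection is automatically a proper coloring since distinct vertices receive distinct colors. So the task reduces to producing an SDR for the family $\{L(v):v\in V(G)\}$, which is a standard Hall's Theorem setup.

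First I would build the auxiliary bipartite graph $B$ with bipartition $X=V(G)$ and $Y=\mathcal{L}$, putting an edge $vc$ whenever $c\in L(v)$. A matching of $B$ that saturates $X$ is precisely an injective choice function, so by Theorem~\ref{thm: Halls} it suffices to check Hall's condition: for every $S\subseteq V(G)$,
$$\Bigl|\bigcup_{v\in S} L(v)\Bigr| \;\geq\; |S|.$$

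The hard part (really the only content) is verifying Hall's condition, and this is where the hypothesis $\eta(c)\leq k$ is used. I would double-count the pairs $(v,c)$ with $v\in S$ and $c\in L(v)$. On one hand, the count is
$$\sum_{v\in S} |L(v)| \;=\; k|S|,$$
since $L$ is a $k$-assignment. On the other hand, writing $T=\bigcup_{v\in S}L(v)$, the count is
$$\sum_{c\in T}\bigl|\{v\in S:c\in L(v)\}\bigr| \;\leq\; \sum_{c\in T}\eta(c) \;\leq\; k|T|,$$
where the first inequality follows because $\{v\in S:c\in L(v)\}\subseteq\{v\in V(G):c\in L(v)\}$ and the second uses the assumption that $\eta(c)\leq k$ for every color. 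Comparing the two expressions yields $k|S|\leq k|T|$, i.e.\ $|T|\geq|S|$, which is exactly Hall's condition.

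Therefore $B$ admits a matching saturating $X$, and the corresponding map $f:V(G)\to\mathcal{L}$ is an injective proper $L$-coloring of $G$ whose range has size $|V(G)|$. I do not anticipate any obstacle beyond setting up the double count correctly; the argument is purely a packaging of Hall's theorem with the multiplicity hypothesis.
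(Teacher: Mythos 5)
Your proposal is correct and follows essentially the same route as the paper: both construct the auxiliary bipartite graph between $V(G)$ and $\mathcal{L}$, verify Hall's condition by double-counting edges incident to a set $S$ (using $|L(v)|=k$ on one side and $\eta(c)\leq k$ on the other), and extract the coloring from a matching saturating $V(G)$. No meaningful differences.
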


\begin{proof}
Suppose $G$ is a graph with vertices $v_1, \ldots, v_n$, and suppose $L$ is a $k$-assignment for $G$ such that, for each color $c \in \mathcal{L}$, $\eta(c) \leq k$. Now, consider an $X,Y$-bigraph $H$ defined as follows: $X = V(G)$, $Y =\mathcal{L}$, and for each $u \in X$ and $v \in Y$, $uv \in E(H)$ if and only if $v \in L(u)$.

Suppose $S$ is a nonempty subset of $X$. Let $E$ be the set of edges with one endpoint in $S$ and the other endpoint in $N_H(S)$. Notice that $d(u)=k$ for each $u \in X$ and $d(v) \leq k$ for each $v \in Y$. By counting the edges incident to the vertices in $S$, we have that $|E| = k|S|$. On the other hand, by counting the edges incident to the vertices in $N(S)$, we have that $|E| \leq k|N_H(S)|$. It follows that $|S| \leq |N_H(S)|$; so, by Theorem~\ref{thm: Halls}, $H$ has a matching $M$ that saturates $X$. By coloring each vertex $v \in V(G)$ with the color $c \in \mathcal{L}$ to which it is matched in $M$, we obtain a proper $L$-coloring of $G$ with range of size $|V(G)|$.
\end{proof}

The following Lemma from~\cite{KM19} will be used throughout this Subsection. Before we state it, we need a definition. Suppose $G$ is a graph, $L$ is a $k$-assignment for $G$, and $f$ is a proper $L$-coloring of $G$. We say that a color $c \in \mathcal{L}$ is \emph{used excessively} by $f$ if $|f^{-1}(c)| > \lceil \eta(c)/k \rceil$. 

\begin{lem} [\cite{KM19}] \label{lem: excess}
Suppose $G$ is a graph, and suppose that $L$ is a $k$-assignment for $G$ such that $\max_{c \in \mathcal{L}} \eta(c) < 2k.$  If there is a proper L-coloring of $G$ that uses no color $c \in \mathcal{L}$ excessively, then $G$ is proportionally $L$-colorable.
\end{lem}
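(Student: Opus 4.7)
The plan is to modify the given non-excessive proper $L$-coloring $f$ into a proportional one via a finite sequence of local swaps. The hypothesis $\max_{c \in \mathcal{L}} \eta(c) < 2k$ forces $\lceil \eta(c)/k \rceil \leq 2$ and $\lfloor \eta(c)/k \rfloor \in \{0,1\}$ for every color. Combined with the non-excessive condition, the only possible obstruction to proportionality is that some color $c$ with $\eta(c) \geq k$ (hereafter called a \emph{heavy} color) is used zero times by $f$. So the whole task reduces to modifying $f$ until every heavy color is used at least once.

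I would proceed by contradiction: among all proper non-excessive $L$-colorings, pick $f$ using the greatest number of heavy colors, and assume some heavy color $c_0$ is unused. Let $V_{c_0} = \{v : c_0 \in L(v)\}$, so $|V_{c_0}| = \eta(c_0) \geq k$. Call a vertex $v$ \emph{safe} if $f(v)$ is not heavy, or $f(v)$ is heavy with $|f^{-1}(f(v))| = 2$; recoloring a safe $v$ cannot cause a heavy color to become unused. If some $v \in V_{c_0}$ is safe, the single recoloring $v \mapsto c_0$ gives a proper (since $c_0$ was unused) non-excessive $L$-coloring using strictly more heavy colors, contradicting maximality. Otherwise every $v \in V_{c_0}$ has $f(v)$ a heavy color with $|f^{-1}(f(v))| = 1$, and I would run a BFS on the alternating color--vertex structure starting from $c_0$: when a color $c$ is popped, inspect $V_c$; a safe vertex ends the search, while each unsafe vertex $v$ encountered is added to $V'$ along with $f(v)$ being added to $C'$ and the queue. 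If the BFS reaches a safe vertex $v_m$ along a path $c_0, v_1, c_1, \ldots, v_m$, perform the cyclic recoloring $v_j \mapsto c_{j-1}$ for $1 \leq j \leq m$. Properness is preserved because for each $j$, the only vertex with color $c_{j-1}$ in $f$ was $v_{j-1}$ (or no one, when $j=1$), which is itself being recolored; non-excessiveness is clear since only counts along the path change; and the new coloring gains $c_0$ as a used heavy color without losing any (by safety of $v_m$), contradicting maximality.

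The main obstacle is proving the BFS must reach a safe vertex. If not, at termination let $V'$ and $C'$ be the visited sets. Because every $c \in C'$ has been processed, $V_c \subseteq V'$, so double-counting incidences $\{(c,v) : c \in C',\, v \in V_c\}$ yields $k|C'| \leq \sum_{c \in C'} \eta(c) = \sum_{c \in C'} |V_c \cap V'| \leq \sum_{v \in V'} |L(v)| = k|V'|$, hence $|C'| \leq |V'|$. On the other hand, $C' = \{c_0\} \cup \{f(v) : v \in V'\}$, and the map $v \mapsto f(v)$ is injective on $V'$ since each such $f(v)$ is heavy with $|f^{-1}(f(v))| = 1$; moreover $c_0$ is not in the image because $c_0$ is unused by $f$. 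Hence $|C'| = |V'| + 1$. These two conclusions contradict each other, so the BFS must reach a safe vertex, the cyclic recoloring succeeds, and the lemma follows.
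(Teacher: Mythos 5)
Your argument is correct, but note that there is nothing in this paper to compare it against: the lemma is imported from \cite{KM19} and used as a black box, with no proof given here. Your proof stands on its own. The reduction is right: $\eta(c)<2k$ forces $\lceil \eta(c)/k\rceil\leq 2$ and $\lfloor \eta(c)/k\rfloor\in\{0,1\}$, so a non-excessive proper $L$-coloring fails to be proportional exactly when some color with $\eta(c)\geq k$ is unused. The extremal choice of $f$, the safe/unsafe dichotomy, and the cyclic recoloring along the alternating path all check out: the colors $c_0,\ldots,c_{m-1}$ are pairwise distinct because each $c_j$ with $j\geq 1$ has a singleton color class and $c_0$ is unused, so after the shift each $c_{j-1}$ has the singleton class $\{v_j\}$ with $c_{j-1}\in L(v_j)$, properness and non-excessiveness are preserved, and the number of used heavy colors strictly increases. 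The crux is termination of the search, and your counting is sound: if the queue empties, every $c\in C'$ is heavy and satisfies $V_c\subseteq V'$, giving $k|C'|\leq \sum_{c\in C'}\eta(c)\leq \sum_{v\in V'}|L(v)|=k|V'|$, while injectivity of $f$ on the unsafe vertices together with $c_0$ being unused gives $|C'|=|V'|+1$, a contradiction. This is a clean augmenting-path/Hall-violation argument and constitutes a valid self-contained proof of the cited result.
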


Lemmas~\ref{lem: leq_k} and~\ref{lem: excess} now allow us to show that $G=K_{n,m}$ is proportionally $L$-colorable whenever $L$ is a $k$-assignment for $G$ with no colors of high multiplicity.

\begin{cor} \label{co: leq_k}
Suppose $n,m \in \N$, $G=K_{n,m}$, and $L$ is a $k$-assignment for $G$ such that for each color $c \in \mathcal{L}$, $\eta(c) \leq k$. Then, there is a proportional $L$-coloring of $G$.
\end{cor}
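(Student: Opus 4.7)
The plan is to obtain this corollary as a direct composition of Lemmas~\ref{lem: leq_k} and~\ref{lem: excess}. First, I would apply Lemma~\ref{lem: leq_k} to the given $k$-assignment $L$ to produce a proper $L$-coloring $f$ of $G$ whose range has size exactly $|V(G)|$. Since $f\colon V(G)\to \mathcal{L}$ has range of size $|V(G)|$, it is injective, so every color class $f^{-1}(c)$ has size at most $1$.

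Next, I would check that $f$ uses no color excessively. For every $c\in\mathcal{L}$ we have $\eta(c)\geq 1$, so $\lceil \eta(c)/k\rceil\geq 1\geq |f^{-1}(c)|$, which is exactly the statement that no color is used excessively by $f$. Finally, the hypothesis $\eta(c)\leq k$ gives $\max_{c\in\mathcal{L}}\eta(c)\leq k<2k$, so Lemma~\ref{lem: excess} applies to $f$ and upgrades it to a proportional $L$-coloring of $G$.

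The only substantive content is the elementary observation that an injective proper $L$-coloring cannot be excessive so long as every color of $\mathcal{L}$ has multiplicity at least $1$, and there is no genuine obstacle in executing the plan. It is worth noting that the bipartite structure of $K_{n,m}$ is never actually used in this argument; the same short proof shows that for \emph{any} graph $H$ and any $k$-assignment $L$ for $H$ in which every color has multiplicity at most $k$, the graph $H$ is proportionally $L$-colorable. The corollary is therefore stated for $K_{n,m}$ only because that is the setting needed for the remaining case analysis in the upper bound of Theorem~\ref{thm: main}.
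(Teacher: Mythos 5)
Your proof is correct and is essentially identical to the paper's own argument: apply Lemma~\ref{lem: leq_k} to get an injective proper $L$-coloring, observe no color is used excessively, and invoke Lemma~\ref{lem: excess}. Your side remark that the bipartite structure is never used is also accurate.
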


\begin{proof}
By Lemma~\ref{lem: leq_k}, there is a proper $L$-coloring $f$ of $G$ with range of size $|V(G)|$. This implies that $|f^{-1}(c)| \leq 1$ for each color $c \in \mathcal{L}$. Thus, by Lemma~\ref{lem: excess}, there is a proportional $L$-coloring of $G$.
\end{proof}

We are now ready to take care of list assignments with many colors of high multiplicity.

\begin{lem} \label{lem: upper bound Knm}
Let $m$, $n$, and $d$ be natural numbers such that $m\geq 3d$ and $n\geq 2$. Suppose $G=K_{n,m}$, and $L$ is an $(m+n-d-1)$-assignment for $G$. Let $\alpha$ denote the number of colors of high multiplicity with respect to $L$. If $\alpha \geq (m-d)/2$, then $G$ is proportionally $L$-colorable.
\end{lem}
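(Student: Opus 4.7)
Write $k = m+n-d-1$, so that $L$ is a $k$-assignment. The plan is first to invoke Lemma~\ref{lem: excess}. Under the hypotheses $m \ge 3d$ and $n \ge 2$, a short check gives $\eta(c) \le n+m < 2k$ for every $c \in \mathcal{L}$, so $\lceil \eta(c)/k \rceil$ equals $1$ for non-high-multiplicity colors and $2$ for high-multiplicity colors. Therefore it suffices to produce a proper $L$-coloring in which each non-high-multiplicity color is used at most once and each high-multiplicity color at most twice; in $K_{n,m}$ the two copies of any twice-used color must lie inside a common partite set.

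The key structural observation is that each high-multiplicity color $c$ has $\eta(c) \ge k+1 = n+m-d$, so at most $d$ vertices of $G$ omit $c$ from their list. In particular, of the $m$ vertices of the larger partite set $B$, at least $m-d \ge 2d$ list $c$. I would exploit this to build pairs greedily in $B$: at step $j$, select a previously unused high-multiplicity color $c_j$ and two currently unpaired vertices $u,v \in B$ with $c_j \in L(u)\cap L(v)$, then assign both vertices the color $c_j$. After $j$ successful steps, at most $2j$ vertices of $B$ are paired, so any fresh high-multiplicity color still lists at least $m-d-2j$ unpaired vertices of $B$; the step succeeds as long as this is at least $2$, and the hypothesis $\alpha \ge (m-d)/2$ keeps the supply of fresh high-multiplicity colors sufficient throughout. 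This produces $\beta_B = \lfloor (m-d)/2 \rfloor$ pairs; if more high-multiplicity colors remain, I would pair further inside $A$ (whenever a leftover high-multiplicity color still lists two vertices of $A$), producing an additional $\beta_A \ge 0$ pairs. Set $\beta = \beta_A + \beta_B$; the hypothesis $m \ge 3d$ guarantees $\beta \ge d$.

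In the second stage, let $R$ consist of all vertices not in any Stage~1 pair, and write $L'(v) = L(v) \setminus \{c_1,\ldots,c_\beta\}$ for $v \in R$, so that $|L'(v)| \ge k - \beta$. I would then assign each vertex of $R$ a distinct color from $L'(v)$ by applying Hall's theorem (Theorem~\ref{thm: Halls}) to the bipartite incidence graph between $R$ and the residual palette $\mathcal{L}'=\mathcal{L}\setminus\{c_1,\ldots,c_\beta\}$. When $\beta \ge d+1$ one has $|R| = n+m-2\beta \le k-\beta$, and after truncating each $L'(v)$ to a uniform size $k-\beta$ the residual color multiplicities are all bounded by $|R| \le k-\beta$, so Lemma~\ref{lem: leq_k} applies directly and finishes the construction.

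The main obstacle is the tight regime where $\beta = d$ exactly (roughly $m\in\{3d,3d+1\}$ with $\alpha$ at its lower bound and $n$ too small to produce a pair inside $A$): there $|R|=k-\beta+1$, so the naive palette count falls one short and Hall's condition must be verified by hand. I would use that $\alpha$ being small forces $|\mathcal{L}| \ge k+1$ (since a constant $k$-assignment would make every color high-multiplicity and give $\alpha = k$), which restores one unit of palette, and then exploit the double-counting inequality $|N(S)| \ge \sum_{v\in S}|L'(v)|/\max_c \eta_{L'}(c)$ together with the strong lower bound $\eta_{L'}(c) \ge |R|-d$ on surviving high-multiplicity colors to derive $|N(S)|\ge|S|$ for every $S \subseteq R$. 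I expect this delicate Hall verification in the tight case to be the principal technical hurdle.
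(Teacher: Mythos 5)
Your overall plan --- greedily pair each of roughly $(m-d)/2$ high-multiplicity colors on two vertices of $B$, delete those colors from the remaining lists, complete with a rainbow (all-distinct) proper coloring, and invoke Lemma~\ref{lem: excess} --- is the same as the paper's, and your handling of the case $\beta \geq d+1$ via Lemma~\ref{lem: leq_k} is sound. The gap is the tight case $\beta = d$, which you flag but do not close, and the repair you sketch for it cannot work. With $k = m+n-d-1$ and $\beta = d$, the residual set $R$ has $|R| = k-d+1$ vertices and residual lists of size at least $k-d = |R|-1$; a rainbow completion fails to exist exactly when all residual lists are identical of size $|R|-1$, i.e., when every $v \in R$ has $L(v) = T \cup \{c_1, \ldots, c_d\}$ for one fixed set $T$. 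This genuinely happens: for $K_{2,3}$ with $d=1$ take $L(u_1)=L(u_2)=L(v_3)=\{1,2,3\}$, $L(v_1)=\{1,4,5\}$, $L(v_2)=\{1,6,7\}$ (here $\alpha = 1 = (m-d)/2$); if your greedy pairs $v_1,v_2$ on color $1$, then $R = \{u_1,u_2,v_3\}$ and $N(R) = \{2,3\}$, so Hall's condition is false. No double-counting refinement can rescue this --- the rainbow completion simply does not exist for that choice of pairs --- and the observation that $|\mathcal{L}| \geq k+1$ is of no use, since the extra palette colors do not occur in the residual lists. (The globally constant assignment with $m = 3d$ is another instance of the same degeneracy, though there your optional pairing inside $A$ happens to save you.)

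What the degenerate case requires is not a sharper Hall verification but a different construction: since in that case $\{c_1,\ldots,c_d\} \subseteq L(v)$ for every $v \in R \supseteq A$, one can relocate one high-multiplicity pair from $B$ into $A$ (coloring $u_1,u_2$ with some $c_j$ and releasing $v_{2j-1},v_{2j}$ back into $R$), which breaks the constancy of the residual lists unless $L$ is constant on all of $G$; and in the fully constant case one must abandon the rainbow completion and instead pair vertices within each partite set directly, minding the parities of $n$ and $m$. This case analysis --- together with a separate argument when $m-d$ is odd and the last usable high-multiplicity color may appear only on $A \cup \{v_{m-d}\}$ --- is precisely where the bulk of the paper's proof lives, and it is the part your proposal is missing.
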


\begin{proof}
Suppose $G$ has bipartition $A=\{ u_1,\ldots,u_n \}$, $B=\{ v_1,\ldots,v_m \}$. Note that for each color $b \in \mathcal{L}$ exactly one of the following three statements must hold: (1) $0 < \eta(b) < m+n-d-1$ and $b$ must be used either zero times or once in a proportional $L$-coloring of $G$, (2) $\eta(b) = m+n-d-1$ and $b$ must be used exactly once, or (3) $m+n-d-1 < \eta(b) \leq m+n < m + (m - 2d) + n + (n - 2) = 2(m+n-d-1)$ and $b$ must be used either once or twice. Since $m \geq 3d$, $\alpha \geq (m-d)/2 \geq d$. Suppose $c_1,\ldots,c_\alpha$ are $\alpha$ colors of high multiplicity with respect to $L$. Notice that each color of high multiplicity must be in at least $m+n-d$ of the lists associated with $L$. Since $|V(G)|=m+n$, we know that for each $i\in[\alpha]$, there exist at most $d$ vertices $v\in V(G)$ such that $c_i\notin L(v)$. We now prove that $G$ is proportionally $L$-colorable in each of two cases: (1) $m-d$ is even, and (2) $m-d$ is odd.

For the first case, assume without loss of generality that $c_i \in L(v_{2i - 1})$ and $c_i \in L(v_{2i})$ for each $i \in [(m - d) / 2]$. Color the vertices $v_{2i - 1}, v_{2i} \in V(G)$ with $c_i$ for $i \in [(m - d) / 2]$. Now consider the subgraph $G' = G - \{v_1, \ldots, v_{m - d}\}$. Notice that $|V(G')| = m + n - (m - d) = n + d$. Let $L'$ be a list assignment for $G'$ defined by $L'(v) = L(v) - \{c_1, c_2, \ldots, c_{(m - d) / 2}\}$ for each $v \in V(G')$; in particular, notice that $|L'(v)| \geq (m + n - d - 1) - (m - d) / 2 \geq n + d - 1$. Recall that $m + n < 2(m + n - d - 1)$. If $L'$ is not a constant $(n + d - 1)$-assignment for $G'$, then we are able to color each vertex $v \in V(G')$ with a color in $L'(v)$ such that we use $n + d$ distinct colors, which completes a proper $L$-coloring of $G$ that uses no color $c \in \mathcal{L}$ excessively; hence, by Lemma \ref{lem: excess}, we have that $G$ is proportionally $L$-colorable. Thus, we may assume that $L'$ is a constant $(n + d - 1)$-assignment for $G'$, which implies that $L(v) = L(v')$ for each $v, v' \in V(G')$; in particular, $\{c_1, c_2, \ldots, c_{(m - d) / 2}\} \subseteq L(v)$ for each $v \in V(G')$.

We will now consider two subcases: (a) $L$ is a constant $(m + n - d - 1)$-assignment for $G$, and (b) $L$ is not a constant $(m + n - d -1)$-assignment for $G$. In subcase (a), assume without loss of generality that $L(v) = \{1, 2, \ldots, m + n - d - 1\}$ for each $v \in V(G)$. Recall that $n + m < 2(n + m - d - 1)$; as a result, we have that $\lfloor n / 2 \rfloor + \lfloor m / 2 \rfloor \leq (n + m) / 2 < n + m - d - 1$ (i.e., $\lfloor n / 2 \rfloor + \lfloor m / 2 \rfloor + 1 \leq |L(v)|$ for each $v \in V(G)$). Color $u_{2i-1}$ and $u_{2i}$ with $i$ for each $i \in [\lfloor n/2 \rfloor]$, and $v_{2i-1}$ and $v_{2i}$ with $\lfloor n/2 \rfloor + i$ for each $i \in [\lfloor m/2 \rfloor]$. If $n$ and $m$ are both even, then this completes a proper $L$-coloring of $G$ that uses no color $c \in \mathcal{L}$ excessively. If $n$ is odd and $m$ is even, then coloring vertex $u_n$ with $\lfloor n / 2 \rfloor + \lfloor m / 2 \rfloor + 1$ completes a proper $L$-coloring of $G$ that uses no color $c \in \mathcal{L}$ excessively. If $n$ is even and $m$ is odd, then coloring vertex $v_m$ with $\lfloor n / 2 \rfloor + \lfloor m / 2 \rfloor + 1$ completes a proper $L$-coloring of $G$ that uses no color $c \in \mathcal{L}$ excessively. Finally, if $n$ and $m$ are both odd, then we have that for each $v \in V(G)$, $|L(v)| - (\lfloor n / 2 \rfloor + \lfloor m / 2 \rfloor) = m + n - d - 1 - (n - 1) / 2 - (m - 1) / 2 = (n + m) / 2 - d \geq (n+d)/2 > 1$. Thus, by coloring vertex $u_n$ with $\lfloor n / 2 \rfloor + \lfloor m / 2 \rfloor + 1$ and vertex $v_m$ with $\lfloor n / 2 \rfloor + \lfloor m / 2 \rfloor + 2$, we obtain a proper $L$-coloring of $G$ that uses no color $c \in \mathcal{L}$ excessively. Hence, by Lemma \ref{lem: excess}, we conclude that $G$ is proportionally $L$-colorable.

In subcase (b), we have that $L$ is not a constant $(m + n - d - 1)$-assignment for $G$. Then there exists $j \in [(m - d) / 2]$ such that $L(v_{2j - 1}) \neq L(v_m)$ or $L(v_{2j}) \neq L(v_m)$. Recall that $c_i \in L(v_{2i-1})$ and $c_i \in L(v_{2i})$ for $i \in [(m - d) / 2]$. Color $v_{2i - 1}$ and $v_{2i}$ with $c_i$ for $i \in [(m - d) / 2] - \{j\}$, and color $u_1$ and $u_2$ with $c_j$. Consider the subgraph $G'' = G - (\{u_1, u_2, v_1, \ldots, v_{m - d}\} - \{v_{2j - 1}, v_{2j}\})$. Notice that $|V(G'')| = m + n - (m - d) = n + d$.
Let $L''$ be the list assignment for $G''$ defined by $L''(v) = L(v) - \{c_1, c_2, \ldots, c_{(m - d) / 2}\}$ for each $v \in V(G'')$; in particular, notice that $|L''(v)| \geq n + d - 1$ for each $v \in V(G'')$. Recall that $m + n < 2(m + n - d - 1)$. Notice that $L''$ is not a constant $(n + d - 1)$-assignment for $G''$, since that would imply that $L(v_{2j - 1}) = L(v_{2j}) = L(v_m)$. Thus, we are able to color each vertex $v \in V(G'')$ with a color in $L''(v)$ such that we use $n + d$ distinct colors, which completes a proper $L$-coloring of $G$ that uses no color $c \in \mathcal{L}$ excessively. Hence, by Lemma \ref{lem: excess}, we have that $G$ is proportionally $L$-colorable.

Now we consider the case where $m - d$ is odd. Notice that $\alpha \geq (m - d) / 2 + 1 / 2 = (m - d + 1) / 2$. Since $m - d \geq 2d \geq 2$ (and thus $m - d \geq 3$ since $m - d$ is odd), we assume without loss of generality that $c_i \in L(v_{2i - 1})$ and $c_i \in L(v_{2i})$ for $i \in [(m - d + 1) / 2 - 1]$ and $c_{(m - d + 1) / 2} \in L(v_{m - d})$. We now consider two subcases: (a) $c_{(m - d + 1) / 2} \in L(v_{\ell})$ for some $m - d + 1 \leq \ell \leq m$, and (b) $c_{(m - d + 1) / 2} \not\in L(v_{\ell})$ for all $m - d + 1 \leq \ell \leq m$.

In subcase (a), we assume that $c_{(m - d + 1) / 2} \in L(v_{\ell})$ for some $m - d + 1 \leq \ell \leq m$. Without loss of generality, suppose $\ell = m - d + 1$. Color the vertices $v_{2i - 1}, v_{2i} \in V(G)$ with $c_i$ for each $i \in [(m - d + 1) / 2]$. Now consider the subgraph $G''' = G - \{v_1, v_2, \ldots, v_{m - d + 1}\}$. Notice that $|V(G''')| = m + n - (m - d + 1) = n + d - 1$. Let $L'''$ be the list assignment for $G'''$ defined by $L'''(v) = L(v) - \{c_1, \ldots, c_{(m - d + 1) / 2}\}$ for each $v \in V(G''')$; in particular, notice that $|L'''(v)| \geq (m + n - d - 1) - (m - d + 1) / 2 \geq n + d - 3/2$ for each $v \in V(L''')$, which implies that $|L'''(v)| \geq n + d - 1$. Thus, we are able to color each vertex $v \in V(G''')$ with a color in $L'''(v)$ such that we use $n + d - 1$ distinct colors, which completes a proper $L$-coloring of $G$ that uses no color $c \in \mathcal{L}$ excessively. Hence, by Lemma \ref{lem: excess}, we have that $G$ is proportionally $L$-colorable.

In subcase (b), we assume that $c_{(m - d + 1) / 2} \not\in L(v_{\ell})$ for all $m - d + 1 \leq \ell \leq m$. Since $c_{(m - d + 1) / 2}$ is a color of high multiplicity with respect to $L$, this implies that $c_{(m - d + 1) / 2} \in L(u_i)$ for each $i \in [n]$. Color the vertices $v_{2i - 1}$ and $v_{2i}$ with $c_i$ for $i \in [(m - d + 1) / 2 - 1]$, and color $u_1$ and $u_2$ with $c_{(m - d + 1) / 2}$. Consider the subgraph $G^{(4)} = G - \{u_1, u_2, v_1, \ldots, v_{m - d - 1}\}$. Notice that $|V(G^{(4)})| = m + n - (m - d + 1) = n + d - 1$. Let $L^{(4)}$ be a list assignment for $G^{(4)}$ defined by $L^{(4)}(v) = L(v) - \{c_1, \ldots, c_{(m - d + 1) / 2}\}$ for each $v \in V(G^{(4)})$. As in the previous subcase, we have that $|L^{(4)}(v)| \geq n + d - 1$ for each $v \in V(G^{(4)})$. Thus, we are able to color each vertex $v \in V(G^{(4)})$ with a color in $L^{(4)}(v)$ such that we use $n + d - 1$ distinct colors, which completes a proper $L$-coloring of $G$ that uses no color $c \in \mathcal{L}$ excessively. Hence, it follows from Lemma \ref{lem: excess} that $G$ is proportionally $L$-colorable, which concludes our proof.

\end{proof}

Finally, we take care of all remaining list assignments.

\begin{lem} \label{Knm m+n-d-1}
Let $m$, $n$, and $d$ be natural numbers such that $m\geq 3d$ and $n\geq 2$.  Then, $K_{n,m}$ is proportionally $(m+n-d-1)$-choosable. Consequently, $\chi_{pc}(K_{n,m})\leq m+n-d-1$.
\end{lem}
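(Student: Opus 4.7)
The plan is to take an arbitrary $(m+n-d-1)$-assignment $L$ for $G = K_{n,m}$, set $k = m+n-d-1$, and split on the number $\alpha$ of colors of high multiplicity with respect to $L$. The case $\alpha = 0$ is handled immediately by Corollary~\ref{co: leq_k}, and the case $\alpha \geq (m-d)/2$ is handled by Lemma~\ref{lem: upper bound Knm}, so the only remaining work is the intermediate range $1 \leq \alpha < (m-d)/2$.

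In this range, I would let $c_1,\ldots,c_\alpha$ be the high-multiplicity colors. Since $\eta(c_i) \geq k+1 = m+n-d$, at most $d$ vertices of $V(G)$ omit $c_i$, so $B_i := \{v \in B : c_i \in L(v)\}$ has size at least $m-d$. Because $2\alpha < m-d$, a greedy selection yields disjoint pairs $\{v_{a_i}, v_{b_i}\} \subseteq B_i$, one pair for each $c_i$, and I pre-color both endpoints of each such pair with $c_i$. Writing $G' = K_{n, m-2\alpha}$ for the subgraph on the remaining vertices and $L'(v) = L(v) \setminus \{c_1,\ldots,c_\alpha\}$ for the residual lists (so $|L'(v)| \geq k-\alpha$), the problem reduces to finding a proper $L'$-coloring of $G'$ in which every color is used at most once, i.e., a system of distinct representatives for $L'$. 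Combined with the pre-coloring this yields a proper $L$-coloring of $G$ that uses no color excessively, and Lemma~\ref{lem: excess} then delivers a proportional $L$-coloring.

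By Hall's theorem (Theorem~\ref{thm: Halls}) applied to the incidence bigraph of $V(G')$ and $\mathcal{L}'$, such a system of distinct representatives exists iff $|\bigcup_{v \in S} L'(v)| \geq |S|$ for every $S \subseteq V(G')$. When $\alpha \geq d+1$ the trivial bound $|L'(v)| \geq k-\alpha \geq n+m-2\alpha = |V(G')|$ makes Hall's condition automatic. The main obstacle is the sub-range $1 \leq \alpha \leq d$, in which $|L'(v)|$ can fall strictly below $|V(G')|$ and Hall's condition can in principle fail via a large set $S$ of vertices with identical restricted lists. Here the plan is to exploit non-constancy of $L$---which is forced because a constant $k$-assignment would make all $k$ of its colors high-multiplicity, and $k \geq (m-d)/2$ would place us back in Case~2---by choosing the pre-colored pairs so that every potentially deficient $S$ contains at least two vertices with distinct restricted lists, and by swapping a $c_j$-pair from $B$ for a pair $\{u_1,u_2\} \subseteq A$ colored by $c_j$ whenever no choice confined to $B$ breaks the bad configuration, exactly in the spirit of subcase~(b) of the proof of Lemma~\ref{lem: upper bound Knm}. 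Working out this bookkeeping and verifying Hall's condition throughout the sub-range $1 \leq \alpha \leq d$ is the technical heart of the argument.
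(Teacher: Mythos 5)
Your reduction is set up correctly as far as it goes: the cases $\alpha=0$ and $\alpha\geq(m-d)/2$ are indeed disposed of by Corollary~\ref{co: leq_k} and Lemma~\ref{lem: upper bound Knm}, the greedy choice of disjoint pairs in the sets $B_i$ is valid since $|B_i|\geq m-d>2\alpha$, and your observation that $|L'(v)|\geq k-\alpha\geq|V(G')|$ forces Hall's condition when $\alpha\geq d+1$ is sound. But the sub-range $1\leq\alpha\leq d$ is not a loose end to be ``worked out''; it is the entire content of the lemma, and your sketch for it does not constitute an argument. In that range the residual lists can be as much as $d+1-\alpha$ colors shorter than $|V(G')|$, so a system of distinct representatives can genuinely fail to exist, and it is not even clear that an SDR is the right target: when the lists are nearly constant, any proper coloring avoiding excessive use must reuse some non-high-multiplicity colors on pairs inside a partite set (this is exactly what subcase~(a) of Lemma~\ref{lem: upper bound Knm} is forced to do). ``Non-constancy of $L$'' rules out only the single fully constant assignment; it does not prevent, say, $m+n-2\alpha$ vertices from sharing an identical restricted list of size $k-\alpha$, and you give no mechanism for converting near-constant configurations into a valid coloring. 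The proposed fix --- choosing the pre-colored pairs cleverly and occasionally moving a $c_j$-pair into $A$ --- is a plausible direction but is asserted, not proved.

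The paper avoids this difficulty entirely by running a minimal-counterexample argument on $\alpha$ rather than a direct construction. Given a bad assignment $L$ minimizing $\alpha$, it picks one high-multiplicity color $c$, replaces $c$ by a brand-new color $z$ on $m-d$ vertices of $B$ whose lists contain $c$, and checks that the resulting assignment $L'$ has only $\alpha-1$ colors of high multiplicity (both $z$ and the demoted $c$ now have multiplicity at most $k$). Minimality yields a proportional $L'$-coloring $f$, and a short case analysis --- recoloring the $z$-colored vertex with $c$, or exchanging colors between $u_1$, $v_1$, and a suitable vertex of $B$ whose list still contains $c$ --- converts $f$ into a proportional $L$-coloring, contradiction. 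The counting that makes the final exchange work ($|L''(u_1)|\geq d+1$, using $\alpha\leq(m-d-1)/2$ from Lemma~\ref{lem: upper bound Knm}) is precisely the quantitative step your outline lacks. As written, your proposal proves the lemma only when $\alpha=0$, $\alpha\geq d+1$, or $\alpha\geq(m-d)/2$, and leaves the hardest range open.
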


\begin{proof}
Suppose $G=K_{n,m}$, and $G$ has bipartition $A=\{u_1,\ldots,u_n\}$, $B=\{v_1,\ldots,v_m\}$. Note that given any $(m+n-d-1)$-assignment $K$ for $G$ with palette $\mathcal{K}$, for each $b \in \mathcal{K}$ exactly one of the following three statements must hold: (1) $0 < \eta(b) < m+n-d-1$ and $b$ must be used either zero times or once in a proportional $K$-coloring of $G$, (2) $\eta(b) = m+n-d-1$ and $b$ must be used exactly once, or (3) $m+n-d-1 < \eta(b) \leq m+n < m + (m - 2d) + n + (n - 2) = 2(m+n-d-1)$ and $b$ must be used either once or twice. For the sake of contradiction, suppose $G$ is not proportionally $(m+n-d-1)$-choosable. Let $L$ be an $(m+n-d-1)$-assignment for $G$ such that there is no proportional $L$-coloring of $G$ and the number of colors of high multiplicity with respect to $L$, which we call $\alpha$, is as small as possible. By Corollary \ref{co: leq_k}, we know that $\alpha \geq 1$, which implies that there is some color $c\in \mathcal{L}$ such that $\eta(c)\geq m+n-d$. Also, it follows from Lemma \ref{lem: upper bound Knm} that $\alpha \leq (m - d - 1) / 2$.

Since $|V(G)| = m+n$, notice that we have the following bounds: $m+n-d \leq \eta(c) \leq m+n$. Without loss of generality, suppose that there exists $m_c\in \mathbb{N}$ such that $c\in L(v_j)$ if and only if $j \in [m_c]$. Note that $|\{ v_1,\ldots,v_{m_c} \}| \geq m-d$. Suppose $z$ is a color such that $z \notin \mathcal{L}$. Let $L'$ be the $(m+n-d-1)$-assignment for $G$ defined as follows: $L'(v) = (L(v) \cup \{z\})-\{c\}$ for $v \in \{v_1,\ldots,v_{m-d}\}$, and $L'(v) = L(v)$ for $v \in V(G)-\{ v_1,\ldots,v_{m-d} \}$. Suppose that $\mathcal{L}'$ is the palette of $L'$. Since $\eta_{L'}(z) = m-d$ and $\eta_{L'}(c) \leq m+n-(m-d) = n+d < 2d + n \leq m-d+n$, $z$ and $c$ are not colors of high multiplicity with respect to $L'$.  So, there are $\alpha-1$ colors of high multiplicity with respect to $L'$. Thus, we have that $G$ is proportionally $L'$-colorable.

Suppose that $f$ is a proportional $L'$-coloring of $G$. If $|f^{-1}(z)|=0$, then $f$ is a proper $L$-coloring of $G$ that uses no colors excessively, and so Lemma \ref{lem: excess} implies that $G$ is proportionally $L$-colorable which is a contradiction. If $|f^{-1}(c)|=0$ and $|f^{-1}(z)|=1$, then by recoloring the vertex colored with $z$ with the color $c$, we obtain a proportional $L$-coloring of $G$ which is a contradiction.

As such, we may assume $|f^{-1}(c)|=|f^{-1}(z)|=1$, implying that: $f^{-1}(c) \cup f^{-1}(z) \subseteq B$ or $f^{-1}(c) \subseteq A$ and $f^{-1}(z) \subseteq B$. If $f^{-1}(c) \cup f^{-1}(z) \subseteq B$, we can obtain a proportional $L$-coloring of $G$ by recoloring the vertex colored with $z$ with $c$ which is a contradiction. So, we may suppose $f^{-1}(c) \subseteq A$ and $f^{-1}(z) \subseteq B$.  We assume without loss of generality that $f(u_1)=c$ and $f(v_1)=z$. Let $D = \{ a\in \mathcal{L}': |f^{-1}(a)\cap B|=2 \}$, and let $L''(u_1)=L'(u_1)-(f(A) \cup D)$. Notice that $|L'(u_1)| = m+n-d-1$, $|f(A)|\leq n$, and $|D| \leq \alpha - 1 \leq (m - d - 3) / 2$; thus, we have that $|L''(u_1)| \geq (m+n-d-1)-(n+(m-d-3)/2) = (m-d+1)/2 \geq d+1/2$ which implies that $|L''(u_1)| \geq d+1$. Let $S = \{ a\in f(B): a\notin D\cup \{z\} \}$; note that $S$ consists of all the colors other than $z$ that were used by $f$ to color precisely one vertex in $B - \{v_1\}$. As such, we have that $v \in B - \{v_1\}$ if and only if either $f(v) \in D$ or $f(v) \in S$.

Now, we finish the proof by deriving a contradiction in each of the following two cases: (1) $L''(u_1)\not\subseteq S$ and (2) $L''(u_1)\subseteq S$. In case (1), let $L'''(u_1)=L''(u_1)-S$. We know there is a color $w \in L'''(u_1)$. By the definition of $L'''(u_1)$ we know that $f$ did not use $w$. Thus, by recoloring $u_1$ with $w$ and $v_1$ with $c$, we obtain a proportional $L$-coloring of $G$ which is a contradiction.

For case (2), suppose that $L''(u_1) = \{ a_1,\ldots,a_t \}$.  We know that $t \geq d+1$, and since $L''(u_1) \subseteq S$, we further have that $f^{-1}(L''(u_1)) \subseteq B - \{v_1\}$.  Notice that $|f^{-1}(a_i)| = 1$ for each $i \in [t]$, and so $|f^{-1}(L''(u_1))| = t \geq d+1$. Therefore, there must exist a vertex  $v \in f^{-1}(L''(u_1))$ such that $c\in L(v)$ since $c$ is not in at most $d$ of the lists associated with $L$.  As a result, by recoloring $v$ with $c$, $u_1$ with $f(v)$, and $v_1$ with $c$, we obtain a proportional $L$-coloring of $G$ which is a contradiction.
\end{proof}

We can now prove Theorem~\ref{thm: main}.

\begin{proof}
Suppose $G = K_{n,m}$.  We have that $\max\{ n + 1, \lceil n / 2 \rceil + \lceil m / 2 \rceil\} \leq \chi_{pc}(G)$ by Theorem~\ref{thm: old} and Theorem~\ref{pro: tpartitelower}.  The fact that $\chi_{pc}(G) \leq n + m  - 1 - \lfloor m/3 \rfloor $ follows from Theorem~\ref{thm: old} when $m=2$.  When $m \geq 3$, Lemma~\ref{Knm m+n-d-1} implies that $\chi_{pc}(G) \leq n + m - d - 1$ for any $d \in \N$ satisfying $d \leq m / 3$. If we let $d = \lfloor m / 3 \rfloor$, we have that $\chi_{pc}(G) \leq n + m  - 1 - \lfloor m/3 \rfloor$.
\end{proof}

{\bf Acknowledgment.}  The authors would like to thank Hemanshu Kaul for many helpful conversations.

\end{document}